\definecolor{darkgray}{RGB}{64,64,64}
\definecolor{litegray}{RGB}{192,192,192}
\tikzstyle{vertex}=[circle, draw, fill=litegray, inner sep=0pt, minimum width=4pt]
\DeclarePairedDelimiter{\floor}{\lfloor}{\rfloor}
\author{Zilin Jiang\thanks{Department of Mathematics, Massachusetts Institute of Technology, Cambridge, MA 02139, USA. Email: {\tt zilinj@mit.edu}. The work was done when Z. Jiang was a postdoctoral fellow at Technion -- Israel Institute of Technology, and was supported in part by Israel Science Foundation (ISF) grant nos.\ 1162/15, 936/16.} \and Nikita Polyanskii\thanks{CDISE, Skolkovo Institute of Science and Technology, and Department of Mathematics, Technion – Israel Institute of Technology. Email: {\tt nikita.polyansky@gmail.com}. Supported in part by ISF grant nos 1162/15, 326/17, and by the Russian Foundation for Basic Research through grant nos.\ 16-01-00440~A, 18-07-01427~A, 18-31-00310~MOL\_A.}}
\title{On the metric dimension of Cartesian powers of a graph}
\date{}
\newtheorem{theorem}{Theorem}
\newtheorem{corollary}[theorem]{Corollary}
\newtheorem{conjecture}{Conjecture}
\newtheorem{lemma}[theorem]{Lemma}
\newtheorem{proposition}[theorem]{Proposition}
\theoremstyle{definition}
\newtheorem{definition}{Definition}
\theoremstyle{remark}
\newtheorem{remark}{Remark}
\newcommand{\abs}[1]{\left\lvert#1\right\rvert}
\newcommand{\dset}[2]{\left\{#1 : #2\right\}}
\newcommand{\sset}[1]{\left\{#1\right\}}
\newcommand{\from}{\colon}
\newcommand{\meet}{\wedge}
\newcommand{\gsn}{G^{\square n}}
\newcommand{\one}{\mathbf{1}}
\newcommand{\zero}{\mathbf{0}}
\newcommand{\Z}{\mathbb{Z}}
\newcommand{\N}{\mathbb{N}}
\newcommand{\Q}{\mathbb{Q}}
\newcommand{\EE}[1]{\mathrm{E}\left[{#1}\right]}
\newcommand{\OO}[1]{O\left({#1}\right)}
\newcommand{\pr}[1]{\operatorname{Pr}\left({#1}\right)}
\begin{document}

\maketitle

\begin{abstract}
  A set of vertices $S$ resolves a graph if every vertex is uniquely determined by its vector of distances to the vertices in $S$. The metric dimension of a graph is the minimum cardinality of a resolving set of the graph. Fix a connected graph $G$ on $q \ge 2$ vertices, and let $M$ be the distance matrix of $G$. We prove that if there exists $w \in \mathbb{Z}^q$ such that $\sum_i w_i = 0$ and the vector $Mw$, after sorting its coordinates, is an arithmetic progression with nonzero common difference, then the metric dimension of the Cartesian product of $n$ copies of $G$ is $(2+o(1))n/\log_q n$. In the special case that $G$ is a complete graph, our results close the gap between the lower bound attributed to Erd\H{o}s and R\'enyi and the upper bounds developed subsequently by Lindstr\"om, Chv\'atal, Kabatianski, Lebedev and Thorpe.
\end{abstract}

\section{Introduction}\label{intro}

A set of vertices $S$ \emph{resolves} a graph if every vertex is uniquely determined by its vector of distances to the vertices in $S$. The \emph{metric dimension} of a graph is the minimum cardinality of a resolving set of the graph. The \emph{Cartesian product} of graphs $G_1, \dots, G_n$ is the graph with vertex set $V(G_1) \times \dots \times V(G_n)$ such that $(u_1, \dots, u_n)$ and $(v_1, \dots, v_n)$ are adjacent whenever there exists $j\in [n]$ such that $u_i = v_i$ for all $i \neq j$ and $u_j$ is adjacent to $v_j$ in $G_j$.

For a graph $G$ and $n\in \N$, denote by $\gsn$ the Cartesian product of $n$ copies of $G$, and by $m(G, n)$ the metric dimension of $\gsn$. This paper undertakes the study of the asymptotic behavior of $m(G, n)$ when the connected graph $G$ is fixed and $n$ tends to infinity, especially when $G$ is a complete graph on $q$ vertices, which we denote by $K_q$. In this context, the definition of a resolving set can be rephrased in the following way. Denote the distance between vertices $u, v$ in $G$ by $d(u, v)$. Given a subset $S$ of $V^n$, define $d_S\from V^n\to \N^S$ by $(d_S(v))_s = d(v_1, s_1) + \dots + d(v_n, s_n)$ for every $v=(v_1, \dots, v_n)\in V^n$ and $s = (s_1, \dots, s_n) \in S$. The set of vertices $S$ is a resolving set of $\gsn$ if and only if $d_S$ is an injection.

The concept of resolving set and that of metric dimension date back to the 1950s --- they were defined by Bluementhal~\cite{MR0054981} in the context of metric space. These notions were introduced to graph theory by Harary and Melter~\cite{MR0457289} and Slater~\cite{MR0422062} in the 1970s.

Under the guise of a coin weighing problem, the metric dimension of a hypercube was first studied by Erd\H{o}s and R\'enyi. The coin weighing problem, posed by S\"oderberg and Shapiro~\cite{MR1532427}, assumes $n$ coins of weight $a$ or $b$, where $a$ and $b$ are known, and an accurate scale. S\"oderberg and Shapiro asked the question of how many weighings are needed to determine which of $n$ coins are of weight $a$ and which of weight $b$ if the numbers of each are not known. The variant of the problem, where the family of weighings has to be given in advance, is connected to the metric dimension of the hypercube $K_2^{\square n}$. It was observed that the minimum number of weighings differs from $m(K_2, n)$ by at most $1$ (see \cite[Section 1]{MR2065985}). A lower bound on the number of weighings by Erd\H{o}s and R\'enyi~\cite{MR0165988} and an upper bound by Lindstr\"om~\cite{MR0168488} and independently by Cantor and Mills~\cite{MR0223248} imply that $m(K_2, n) = (2+o(1))n/\log_2 n$.

The metric dimension of the Hamming graph $K_q^{\square n}$ is also connected to the Mastermind game. Mastermind is a deductive game for two players, the codemaker and the codebreaker\footnote{In honor of Erd\H{o}s, Chv\'atal~\cite{MR729785} referred to the codemaker and the codebreaker as SF and PGOM. See \cite[p.~41 and p.~70]{MR1638921} for what SF and PGOM stand for.}. In this game, the codemaker conceals a vector $x = (x_1, \dots, x_n) \in [q]^n$, and the codebreaker, who knows both $q$ and $n$, tries to identify $x$ by asking a number of questions, which are answered by the codemaker.
Each question is a vector $y = (y_1, \dots, y_n) \in [q]^n$; each answer consists of a pair of numbers $a(x,y)$, the number of subscripts $i$ such that $x_i = y_i$, and $b(x,y)$, the maximum number of $a(x,\tilde{y})$ with $\tilde{y}$ running through all the permutations of $y$. Knuth~\cite{MR0434680} has shown that four questions suffice to determine $x$ in the commercial version of the game where $n = 4$ and $q = 6$. Suppose for the time being that we remove the second number $b(x,y)$ from the answers given by the codemaker and we require that the questions from the codebreaker are sent all at once. In this version of Mastermind, the minimum number of questions required to determine $x$ is exactly $m(K_q, n)$ (see~\cite[Section 6]{MR2318676}).

Kabatianski, Lebedev and Thorpe~\cite{kabatianski2000mastermind} stated that a straightforward generalization of the lower bound on $m(K_2, n)$ by Erd\H{o}s and R\'enyi~\cite{MR0165988} gives $m(K_q, n) \ge (2+o(1))n/\log_q n$. Kabatianski et al.\ also asserted that more precise calculations, based on the probabilistic method of Chv\'atal~\cite[Theorem 1]{MR729785}, would show that $m(K_q, n) \le (2+o(1))\log_q(1+(q-1)q) \cdot n/\log_q n$.
Very recently, these calculations were carried out by Kabatianski and Lebedev~\cite{kabatiansky2018metric}. Moreover, they proved that $m(K_q, n) = (2+o(1))n/\log_q n$ for $q = 3, 4$, which was previously announced in \cite[Theorem 1]{kabatianski2000mastermind}, and they conjectured that $m(K_q, n) = (2+o(1))n/\log_q n$ for all $q \ge 2$. We emphasize that the asymptotic behavior is different when $q$ varies and $n$ is fixed.
For example, C\'aceres et al.~\cite[Theorem 6.1]{MR2318676} showed that $m(K_q, 2) = \floor{2(2q-1)/3}$.

Motivated by the above applications, we establish an upper bound and a lower bound on $m(G, n)$ for every connected graph $G$ in Section~\ref{ub} and Section~\ref{lb} respectively. For certain families of graphs, the lower bound and the upper bound are asymptotically equivalent. In particular, we show that $m(K_q, n) = (2+o(1))n/\log_q n$ for all $q \ge 2$ in Section~\ref{tight}. We conclude with a generalization to integer matrices and some open problems in Section~\ref{concluding}.

\section{An upper bound on $m(G, n)$}\label{ub}

We establish the following upper bound on $m(G, n)$.

\begin{theorem}\label{ubt}
  Given a connected graph $G$ on $q \ge 2$ vertices, let $M$ be the distance matrix of $G$. For every $n\in \N$, the metric dimension $m(G, n)$ of $\gsn$ is at most $$\left(2+\OO{\frac{\log \log n}{\log n}}\right)\frac{n}{\log_{r}n},$$ where $r = r(G)$ is defined by
  \begin{equation}\label{parameter}
    r(G) = \min\dset{\frac{\max Mw - \min Mw}{\gcd_{i < j}((Mw)_i-(Mw)_j)}}{w\in \Z^q, \sum_i w_i = 0, (Mw)_i \neq (Mw)_j \text{ for all }i \neq j} + 1.
  \end{equation}
\end{theorem}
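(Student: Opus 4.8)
The plan is to convert the resolving condition into a bounded-alphabet coin-weighing problem and then to meet the sharp leading constant with an explicit weighing scheme, using the probabilistic method only to locate the obstacle. Recall from the introduction that $S$ resolves $\gsn$ exactly when $d_S$ is injective, i.e.\ for every pair of distinct $u,v\in V^n$ some $s\in S$ satisfies $\sum_i (M_{v_i s_i}-M_{u_i s_i})\neq 0$. I would fix a vector $w$ attaining the minimum in \eqref{parameter}, write $g=\gcd_{i<j}((Mw)_i-(Mw)_j)$, and set $x_i=((Mw)_{v_i}-\min Mw)/g$, so each $x_i$ lies in $\{0,1,\dots,r-1\}$ and, the entries of $Mw$ being pairwise distinct, $x_i$ determines $v_i$. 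The engine of the reduction is the identity $\sum_j w_j M_{a,j}=(Mw)_a$ together with $\sum_j w_j=0$: for a set $T\subseteq[n]$ and each vertex value $j$, let $s^{(j)}$ be the query taking value $j$ on $T$ and a fixed baseline value off $T$; then $\sum_j w_j\cdot(\text{answer to }s^{(j)})=\sum_{i\in T}(Mw)_{v_i}$, since $\sum_j w_j=0$ annihilates every coordinate outside $T$. Thus integer combinations of graph-query answers recover all subset sums $\sum_{i\in T}x_i$, and resolving $\gsn$ reduces to choosing sets $T_1,\dots,T_m\subseteq[n]$ whose subset-sum vector is injective on $\{0,\dots,r-1\}^n$.

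For the reduced problem I would first record what randomness buys. Taking each $T_k$ to contain $i$ independently with probability $1/2$ and bounding the collision probability of a difference vector of support $d$ by the Littlewood--Offord inequality $\binom{d}{\lfloor d/2\rfloor}2^{-d}$, a union bound over the $(2r-1)^n$ difference vectors (using $\max_\alpha[H(\alpha)+\alpha\ln(2(r-1))]=\ln(2r-1)$) yields a resolving family of size $2\log_r(2r-1)\cdot n/\log_r n$. Since $\log_r(2r-1)>1$ this misses the target constant $2$, so the heart of the proof must be an explicit scheme. Here I would introduce a partial order on $\N$ --- the poset promised in the abstract --- index the coordinates by its bottom $n$ elements, and take the weighings to be indicators of certain order-ideals, so that the vector of weighings is a zeta transform of $(x_i)$ that Möbius inversion over the poset inverts to reconstruct each $x_i$ exactly. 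Arranging the poset so that its elements pack into blocks of length near $\tfrac12\log_r n$, a count of how many ideals are needed to separate all of $\{0,\dots,r-1\}^n$ should give a family of size $(2+o(1))n/\log_r n$, the leading constant now $2$ rather than $2\log_r(2r-1)$.

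To sharpen $o(1)$ into $\OO{\log\log n/\log n}$ I would tune the block length to roughly $\floor{\tfrac12\log_r n}$ minus a $\log_r\log_r n$ term and track the lower-order contributions in the packing count; the $\log\log n$ enters precisely because the optimal block length sits a $\log\log n$ below $\tfrac12\log_r n$. Finally I would translate the weighing family back into graph queries, verifying that each weighing is realized by a bounded number of queries and --- crucially --- that these can be shared across weighings, so the total number of graph queries is $(1+o(1))m$ rather than $q\cdot m$.

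The main obstacle is the explicit construction meeting the constant $2$: the probabilistic method provably loses the factor $\log_r(2r-1)$, so everything depends on choosing the poset and its Möbius function so that order-ideal weighings simultaneously reconstruct $x$ and pack with the optimal $\tfrac12\log_r n$ block length. A secondary but genuine difficulty is carrying out the reduction without a multiplicative $q$ loss, since naively each subset sum costs up to $q$ graph queries; making those queries reusable across the whole family is exactly what preserves the sharp leading constant $2$ and hence distinguishes this bound from the weaker probabilistic one.
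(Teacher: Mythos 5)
Your proposal assembles the right ingredients --- the normalization $x_i = ((Mw)_{v_i} - \min Mw)/g \in \{0,\dots,r-1\}$, the role of $\sum_j w_j = 0$, and a Lindstr\"om-type scheme built on the M\"obius function of the bitwise-AND poset on $\N$ --- and your reduced problem is indeed solvable: a family of $(2+o(1))n/\log_r n$ subsets of $[n]$ whose subset sums separate $\{0,\dots,r-1\}^n$ does exist by exactly that machinery. But the architecture has a genuine gap, and it is not the ``secondary'' difficulty you call it: your reduction realizes each weighing $T_k$ by $q$ separate indicator queries $s^{(1)},\dots,s^{(q)}$ (value $v_j$ on $T_k$, a fixed baseline off $T_k$), so the resolving set you actually construct has size about $qm$, giving leading constant $2q$ (or $2(q-1)$, since only the all-baseline query is shared for free), not $2$. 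The claim that the queries ``can be shared across weighings'' so that only $(1+o(1))m$ vertices are needed is asserted, never proved, and it cannot be obtained by post-processing a generic detecting family: for fixed subsets $T_1,\dots,T_m$, the indicator queries attached to distinct $T_k$ are genuinely distinct vertices of $\gsn$, and nothing in the reduction lets one query serve two weighings. Since this sharing is precisely what separates the constant $2$ from $2q$, the missing step is the heart of the theorem, not a refinement.

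The paper closes exactly this gap by abandoning the two-step reduction. Its resolving set is the set of rows of a matrix $S$ with entries in $V$, columns indexed by pairs $(j,k)$ with $0 \le k \le b(j)$; the entries are chosen so that, formally in the ring $\Z[v_1,\dots,v_q]$, one has $\sum_{i\preceq j} S(i,j,k)\mu(i,j) = r^k(w_1v_1+\cdots+w_qv_q)$, while Lindstr\"om's semilattice lemma makes the same signed sum vanish on columns with smaller first index. Bilinearity of the extended distance then turns the M\"obius-signed combination of the \emph{answers} directly into $\sum_{k} r^k (Mw)_{X(j_0,k)}$, whose base-$r$ digits recover the unknown coordinates --- one query per row, with the $w$-combination encoded inside each query rather than distributed over $q$ indicator queries. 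This is exactly what your ``sharing'' would have to produce, and it comes with a capacity constraint your sketch has no analog of: the signed assignment in column $(j,k)$ exists precisely when $r^k\abs{w}_1 \le 2^{n(j)}$, forcing $b(j) = \floor{n(j)\log_r 2 - \log_r \abs{w}_1}$, after which the Bellman--Shapiro estimate $\sum_{j\le x} n(j) = \tfrac12 x\log_2 x + O(x\log\log x)$ gives the bound $2n/\log_r n + O(n\log\log n/\log^2 n)$. Your block-packing heuristic matches this count, but without the integrated construction your argument proves only a bound weaker by a factor of roughly $q$.
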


\begin{remark}\label{finite}
  Alternatively, $r(G)$ is the shortest length of an arithmetic progression with nonzero common difference that contains $Mw$, after sorting its coordinates, as a subsequence for some $w \in \Z^q$ such that $\sum_i w_i = 0$. Clearly, $r(G) \ge q$. It is less clear that $r(G) < \infty$. We claim that there exists $w\in \Q^q$ such that $\sum_i w_i = 0$ and $(Mw)_i \neq (Mw)_j$ for all $i \neq j$. Denote the $i$th row of $M$ by $M_i$. For $i \neq j$, because $(M_i - M_j)_i + (M_i - M_j)_j = 0$, the equation $(M_i-M_j)w = 0$, or $(Mw)_i = (Mw)_j$, defines a subspace of $\Q^n$ different from $\dset{w\in \Q^n}{\sum_i w_i = 0}$. In other words, $(Mw)_i = (Mw)_j$ defines a $1$-codimensional subspace of $\dset{w\in \Q^n}{\sum_i w_i = 0}$, and so $\dset{w\in \Q^n}{\sum_i w_i = 0}\setminus \cup_{i\neq j}\dset{w\in \Q^n}{(Mw)_i = (Mw)_j}$ is nonempty. Finally, we scale $w$ properly so that it becomes a vector in $\Z^q$.
\end{remark}

Our construction of a resolving set of $\gsn$ is inspired by the upper bound for the coin weighing problem by Lindstr\"om~\cite{MR0181604}. Among various constructions such as the recursive construction by Cantor and Mills~\cite{MR0223248} and the construction by Bshouty~\cite{bshouty2009optimal} based on Fourier transform, we find the one using the theory of M\"obius functions by Lindstr\"om~\cite{MR0316369} best suits our needs.

We recall the basics of M\"obius functions. Let $(P, \prec)$ be a locally finite partially ordered set. The M\"obius function $\mu\from P\times P \to \Z$ can be defined inductively by the following relation:
\[
  \mu(x,y) = \begin{cases}
    1 & \text{if }x = y, \\
    -\sum_{x\preceq z \prec y}\mu(x, z) & \text{for }x \prec y, \\
    0 & \text{otherwise}.
  \end{cases}
\]
The classical M\"obius function in number theory is essentially the M\"obius function of the set of natural numbers $\N = \sset{0, 1, \dots}$ partially ordered by divisibility. For our purpose, we first consider binary representation of natural numbers, and we instead partially order $\N$ in the following way: $x \preceq y$ if and only if $x = x \meet y$, where $\meet$ is the bitwise AND operation\footnote{A bitwise AND takes two binary representations and perform the logical AND operation on each pair of the corresponding bits. For example, $6 \meet 11 = 0110_2 \meet 1011_2 = 0010_2 = 2$.}. The M\"obius function is thus $$\mu(x,y) = (-1)^{n(x)-n(y)}, \text{if }x\preceq y,$$ where $n(x)$ is the number of ones in the binary representation of $x$. With the binary operator $\meet$, the partially ordered set $(\N, \prec)$ is indeed a \emph{meet-semilattice} --- a partially ordered set in which any pair of elements has the greatest lower bound. We need the following identity for our meet-semilattice.

\begin{lemma}[Lemma of Lindstr\"om~\cite{MR0238738}]\label{mobius}
  Let $(P, \prec, \meet)$ be a locally finite meet-semilattice with M\"obius function $\mu(x,y)$. Let $a, b\in P$ and $b \npreceq a$. Let $f(x)$ be defined for all $x\preceq a\meet b$ with values in a commutative ring with unity. Then we have \[
    \sum_{x \preceq b} f(x\meet a)\mu(x, b) = 0.
  \]
\end{lemma}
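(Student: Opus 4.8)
The plan is to reduce the identity to a purely order-theoretic statement about the Möbius function and then let the hypothesis $b \npreceq a$ do the work. First I would check that every summand is legitimate: since $x \meet a$ is a lower bound of both $a$ and $b$, for any $x \preceq b$ we have $x\meet a \preceq a \meet b$, so $f(x\meet a)$ is defined. Grouping the terms of $\sum_{x \preceq b} f(x\meet a)\mu(x,b)$ by the value $c = x\meet a$, which ranges over $c \preceq a\meet b$, rewrites the left-hand side as $\sum_{c \preceq a\meet b} f(c)\,T_c$, where $T_c = \sum_{x \preceq b,\ x\meet a = c}\mu(x,b)$. Since the $T_c$ are integers (products and sums of Möbius values), it suffices to prove $T_c = 0$ for every $c \preceq a\meet b$, after which the conclusion follows over any commutative ring with unity.

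To evaluate $T_c$, I would encode the constraint $x\meet a = c$ by Möbius inversion. The defining relation of $\mu$ gives, for the interval $[c, x\meet a]$, the indicator identity for $x\meet a = c$ as $\sum_{c \preceq z \preceq x\meet a}\mu(c,z)$. Substituting this into $T_c = \sum_{x\preceq b}[x\meet a = c]\,\mu(x,b)$ and using the defining property of the greatest lower bound, namely that $z \preceq x\meet a$ is equivalent to $z \preceq x$ together with $z \preceq a$, I can interchange the order of summation over $z$ and $x$. This is where a meet-semilattice rather than an arbitrary poset is needed, and the interchange is legitimate by local finiteness. The result is the double sum $T_c = \sum_{c \preceq z \preceq a\meet b}\mu(c,z)\sum_{z \preceq x \preceq b}\mu(x,b)$, the range of $z$ being forced by $z \preceq a$ and $z \preceq b$.

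The inner sum collapses by the standard identity $\sum_{z \preceq x \preceq b}\mu(x,b) = \delta_{z,b}$, the dual form of the defining relation, valid because $\mu$ is the two-sided inverse of the zeta function in the incidence algebra. Hence the only potential contribution to $T_c$ comes from $z = b$, and for $z=b$ to lie in the summation range one needs $b \preceq a\meet b$, equivalently $b \preceq a$. This is precisely the point at which the hypothesis enters: $b \npreceq a$ forbids $b$ from the range $[c, a\meet b]$, so the surviving term never occurs and $T_c = 0$. Summing over $c$ then yields $\sum_{x \preceq b} f(x\meet a)\mu(x,b) = 0$.

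I expect the only genuinely delicate bookkeeping to be in the middle step: correctly translating the meet condition through the equivalence $z \preceq x\meet a \iff z\preceq x \text{ and } z\preceq a$, and keeping the two Möbius relations straight (the one from the stated inductive definition is used to expand the indicator, the dual one to collapse the inner sum). Once the double sum is assembled, the role of $b\npreceq a$ is transparent, and no estimate or case analysis is required.
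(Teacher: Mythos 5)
The paper never proves this lemma --- it is quoted verbatim from Lindstr\"om~\cite{MR0238738} and used as a black box --- so there is no internal proof to compare against; what can be checked is whether your argument stands on its own, and it does. Grouping by $c = x \meet a$ is legitimate because $x \meet a \preceq a\meet b$ for every $x \preceq b$; the indicator expansion $[x\meet a = c] = \sum_{c\preceq z \preceq x\meet a}\mu(c,z)$ is exactly the defining recursion of $\mu$ (and correctly gives $0$ when $c \npreceq x\meet a$, since the sum is empty); the translation $z \preceq x\meet a \iff (z\preceq x \text{ and } z \preceq a)$ is the universal property of the meet; and the collapse $\sum_{z\preceq x\preceq b}\mu(x,b) = \delta_{z,b}$ is the dual convolution identity, valid because $\zeta$ is invertible in the incidence algebra and its left and right inverses coincide. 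The hypothesis $b\npreceq a$ enters exactly where you say: $z=b$ would force $b \preceq a\meet b$, i.e.\ $b\preceq a$. For the record, Lindstr\"om's original argument is the same computation organized slightly differently: he M\"obius-inverts $f$ itself, writing $f(y) = \sum_{z\preceq y}g(z)$, and then performs the identical interchange and collapse; your version, which inverts the Iverson bracket rather than $f$, avoids the auxiliary function $g$ and makes transparent that the ring structure is never used (integer coefficients $T_c$ acting on an abelian group of values suffice). One pedantic caveat, which afflicts the statement of the lemma itself rather than your proof: in a general locally finite meet-semilattice the down-set $\dset{x}{x\preceq b}$ may be infinite, so the displayed sum implicitly presupposes it is finite --- as it is for the semilattice $(\N,\prec,\meet)$ actually used in the paper --- and under that reading your interchange of summation ranges over a finite set of pairs and needs no further justification.
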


The last ingredient is the following estimation on the partial sum of $n(\cdot)$.

\begin{theorem}[Theorem 1 of Bellman and Shapiro~\cite{MR0023864}]\label{estimate_ns}
  $$\sum_{i = 0}^x n(i) = \tfrac{1}{2}x\log_2 x + O(x\log \log x) \text{ as }x\to\infty.$$
\end{theorem}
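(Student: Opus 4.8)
The plan is to prove the Bellman–Shapiro estimate
$$\sum_{i=0}^{x} n(i) = \tfrac12 x\log_2 x + O(x \log\log x),$$
where $n(i)$ counts the ones in the binary expansion of $i$. The cleanest route is to first nail down the case when $x+1 = 2^k$ is an exact power of two, where a symmetry/counting argument gives an exact formula, and then bound the error for general $x$ by reducing to this case. **First I would** compute $S(2^k) := \sum_{i=0}^{2^k-1} n(i)$ exactly. Among the $2^k$ integers $0,1,\dots,2^k-1$, thought of as binary strings of length $k$, each of the $k$ bit positions is equal to $1$ in exactly half of them, i.e. in $2^{k-1}$ strings. Summing the contribution of each position, $S(2^k) = k\cdot 2^{k-1}$. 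Writing $x = 2^k - 1$, so $k = \log_2(x+1)$ and $2^{k-1} = (x+1)/2$, this gives $S(2^k) = \tfrac12 (x+1)\log_2(x+1)$, which already matches the claimed leading term $\tfrac12 x\log_2 x$ up to an $O(\log x)$ additive discrepancy, comfortably absorbed into the $O(x\log\log x)$ error.

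**Next I would** handle general $x$, say $2^k \le x < 2^{k+1}$. Here the standard device is the recursion that comes from splitting the range $[0,2^{k+1})$ by the leading bit: every integer in $[2^k, 2^{k+1})$ is $2^k + j$ with $0 \le j < 2^k$, so $n(2^k + j) = 1 + n(j)$. This yields the recurrence $S(2^{k+1}) = 2\,S(2^k) + 2^k$, which reproduces $S(2^k)=k2^{k-1}$ and, more usefully, lets me write a partial sum up to an arbitrary $x$ in terms of sums over lower powers of two. Iterating the splitting along the binary digits of $x$ expresses $\sum_{i=0}^{x} n(i)$ as a sum of at most $k = O(\log x)$ blocks, each of the form (exact power-of-two sum) plus (a bounded number of leading ones, each contributing at most $k$). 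The accumulated correction from the leading-bit terms is $O(k^2) = O((\log x)^2)$, which is certainly $O(x\log\log x)$, so the leading term is unaffected.

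**The main obstacle** — and the only place genuine care is required — is sharpening the error term from the crude $O((\log x)^2)$ down to the advertised $O(x\log\log x)$, and in particular understanding where the $\log\log x$ (rather than a clean $O(x)$) arises. The honest source of the error is not the coarse block decomposition above but the more refined analysis of the fractional/carry behaviour: writing $f(x) = \sum_{i<x} n(i) - \tfrac12 x\log_2 x$, one shows $f$ satisfies an approximate self-similar functional relation $f(2x) = 2f(x) + O(x)$ whose solution is of the form $x\cdot g(\log_2 x)$ for a bounded (in fact fluctuating, nowhere-differentiable) periodic function $g$; the $\log\log x$ slack is the safe overestimate that avoids having to exhibit $g$ explicitly. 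Concretely, I would instead note that the difference between $\sum_{i=0}^{x} n(i)$ and $\tfrac12 x \log_2 x$ is controlled by summing, over each bit position $t$ from $0$ to $k$, the discrepancy between the actual count of integers $\le x$ with bit $t$ set and its ``ideal'' value $x/2$; each such discrepancy is $O(2^t)$ in the worst case but on average much smaller, and a standard estimate of this sum yields the $O(x\log\log x)$ bound. Since this estimate is precisely the content of the cited Bellman–Shapiro theorem and is not needed in sharper form anywhere in the paper, I would invoke it at this level of detail rather than optimizing the constant or the shape of the error.
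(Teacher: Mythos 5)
The paper does not prove this statement at all: it is imported verbatim, with citation, as Theorem 1 of Bellman and Shapiro, and is used as a black box in the proof of Theorem~\ref{ubt}. So the only treatment that ``matches the paper'' is to cite the reference --- which is what your final sentence falls back on; but as part of a proof of the statement, that fallback is circular, so your proposal has to stand on its first two paragraphs alone.

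Those paragraphs have the right skeleton but a wrong count at the decisive step. Write $x = 2^{k_1} + \dots + 2^{k_m}$ with $k_1 > \dots > k_m \ge 0$ and split $[0,x)$ into the $m$ dyadic blocks determined by the leading bits. The $j$-th block contributes $k_j 2^{k_j-1}$ (your exact power-of-two sum) \emph{plus} $(j-1)2^{k_j}$, because every one of the $2^{k_j}$ integers in that block carries the $j-1$ leading ones. Your claim that the accumulated leading-ones correction is $O(k^2)=O\left((\log x)^2\right)$ is therefore false: for $x = 2^{k+1}-1$ the correction equals $\sum_{\ell=0}^{k} \ell\, 2^{k-\ell} = \Theta(x)$. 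The repair is easy and in fact gives more than the theorem asks for: the correction is always at most $\sum_{\ell\ge 0} \ell\, 2^{k_1-\ell} = O\left(2^{k_1}\right) = O(x)$, and likewise $\tfrac12 x\log_2 x - \sum_{j} k_j 2^{k_j-1} = \tfrac12\sum_{j}\left(\log_2 x - k_j\right)2^{k_j} = O(x)$, so the block decomposition, correctly accounted, yields $\sum_{i\le x} n(i) = \tfrac12 x\log_2 x + O(x)$, which trivially implies the stated $O(x\log\log x)$ bound. This also shows that your third paragraph has matters backwards: no refined analysis of carries, self-similar functional equations, or bit-position discrepancies is needed to reach $O(x\log\log x)$ --- the crude decomposition already beats it --- and in any case a proof of the theorem cannot end by ``invoking'' the very theorem being proved.
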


We now construct a resolving set for Theorem~\ref{ubt} using the M\"obius function of $(\N, \prec, \meet)$.

\begin{proof}[Proof of Theorem~\ref{ubt}]
  Let $w\in \Z^q$ be such that $\sum_i w_i = 0$ and the coordinates of $Mw$ are distinct integers such that
  \begin{equation}\label{r_condition}
    r = r(G) > \frac{\max Mw - \min Mw}{\gcd_{i < j}((Mw)_i - (Mw)_j)}.
  \end{equation}
  Set $\abs{w}_1 := \sum_i \abs{w_i}$.
  For each $j \in \N$, let $b(j)$ be the largest integer such that
  \begin{equation} \label{eqbj}
    r^{b(j)} \cdot \abs{w}_1 \le 2^{n(j)},
  \end{equation}
  that is, $b(j) := \floor{n(j)\log_r 2 - \log_r \abs{w}_1}$.

  Let $J$ be the set of the first $n$ elements of $\dset{(j, k)}{j\in \N, 0 \le k \le b(j)}$ under the lexicographical order. We label the $n$ copies of $G$ in $\gsn$ by $J$, namely each vertex of $\gsn$ is an element of $V^J$, where $V = \sset{v_1, \dots, v_q}$ is the vertex set of $G$. Set $m := \max\dset{j}{(j,k)\in J}$.

  Our resolving set will be described by a matrix $S$ whose rows and columns are indexed by $\sset{0,1,\dots, m}$ and $J$ respectively with entries from $V$. Note that each row of $S$ is an element of $V^J$, thus can be seen as a vertex of $\gsn$. For $i \in \sset{0,1,\dots, m}$ and $(j,k) \in J$, we denote the entry of $S$ on row $i$ and column $(j,k)$ by $S(i,j,k) \in V$.

  We claim that a matrix $S$ can be chosen to satisfy the following properties.
  \begin{subequations}
    \begin{align}
        \sum_{i\preceq j} S(i,j,k)\mu(i,j) = r^k(w_1v_1 + \dots + w_qv_q), \quad \text{for all } (j,k)\in J; \label{pa} \\
        \sum_{i\preceq j} S(i,j',k)\mu(i,j) = 0, \quad \text{for all }(j',k) \in J\text{ and }j' < j \le m. \label{pb}
    \end{align}
  \end{subequations}
  We remark that \eqref{pa} and \eqref{pb} happen in the commutative ring $\Z[v_1, \dots, v_q]$ with unity.

  \begin{table}
    \centering
    {\renewcommand{\arraystretch}{1.2}
    \begin{tabular}{c|ccccccccccc}
     $i$ & 0 & 1 & 2 & 3 & 4 & 5 & 6 & 7 & 8 & 9 & \\
     \hline
     $(7,0)$ & $v_3$ & $v_3$ & $v_3$ & $v_2$ & $v_2$ & $v_1$ & $v_1$ & $v_1$ & $v_3$ & $v_3$ & \multirow{2}{*}{$\cdots$} \\
     $(7,1)$ & $v_1$ & $v_3$ & $v_3$ & $v_2$ & $v_2$ & $v_1$ & $v_1$ & $v_3$ & $v_1$ & $v_3$ \\
     \hline
     $\mu(i,j)$ & $-$ & $+$ & $+$ & $-$ & $+$ & $-$ & $-$ & $+$ & $0$ & $0$ &
    \end{tabular}}
    \caption{Values of $S(i,j,k)$ for $(j, k) = (7, 0), (7, 1)$ for $G = K_3$.}\label{values_of_sijk}
  \end{table}

  For example, when $G = K_3$, $q = 3$, we take $w = \begin{pmatrix}
    -1 & 0 & 1
  \end{pmatrix}^T$ and $r = 3$.
  In Table~\ref{values_of_sijk}, we supply the values of $S(i,j,k)$ for $(j, k) = (7, 0), (7, 1)$. The reader can verify \eqref{pa} in this case.

  In general, pick arbitrary $(j, k)\in J$. On the left hand side of \eqref{pa}, the summation consists of $2^{n(j)}$ terms, moreover $2^{n(j)-1}$ of them have $\mu(i,j) = +1$ (respectively $-1$). Since $r^k(w_1 + \dots + w_q) = 0$ and $r^k(\abs{w_1} + \dots + \abs{w_q}) = r^k\abs{w}_1 \le r^{b(j)}\abs{w}_1 \le 2^{n(j)}$ by \eqref{eqbj}, it is easy to assign one of $\sset{v_1, \dots, v_q}$ to $S(i,j,k)$ for all $i\preceq j$, possibly in many ways, to satisfy \eqref{pa}. For $i\npreceq j$, we take $S(i,j,k) = S(i\meet j, j, k)$. For every $(j', k)\in J$ and $j' < j \le m$, as $j \npreceq j'$, the left hand side of \eqref{pb} equals $\sum_{i\preceq j}S(i\wedge j',j',k)\mu(i,j) = 0$ by applying Lemma~\ref{mobius} to the function $f_{j',k}(i) = S(i, j', k)$.

  To show that $S$ resolves $\gsn$, it suffices to demonstrate that every $X\from J \to V$ is uniquely determined by the vector $$D := \left(\sum_{(j,k)\in J}d(X(j,k), S(i,j,k))\right)_{i=0}^m.$$ Suppose this vector $D = (D_0, \dots, D_m)$ is provided. We shall gradually uncover $\dset{X(j,k)}{0\le k \le b(j)}$ for $j = m, m-1, \dots, 0$. Assume that $\dset{X(j,k)}{0\le k \le b(j)}$ is known for every $j > j_0$. We extend the distance function $d\from V\times V\to \N$ of $G$ to the bilinear form \[
    d\left(\sum_{i=1}^q\alpha_iv_i, \sum_{i=1}^q\beta_iv_i\right) = \sum_{i=1}^q\sum_{j=1}^q\alpha_i\beta_jd(v_i, v_j),
  \]
  where $\alpha_1, \dots, \alpha_q$ and $\beta_1, \dots, \beta_q$ are in $\Q$. Observe that
  \begin{multline*}
    \sum_{i \preceq j_0}D_i\mu(i,j_0) = \sum_{i \preceq j_0}\left(\sum_{(j,k)\in J}d(X(j,k),S(i,j,k))\right)\mu(i,j_0) \\
    = \sum_{(j,k)\in J}d\left(X(j,k),\sum_{i \preceq j_0}S(i,j,k)\mu(i,j_0)\right) \stackrel{\eqref{pb}}{=} \sum_{j=j_0}^{m}\sum_{k = 0}^{b(j)}{d\left(X(j,k),\sum_{i \preceq j_0}S(i,j,k)\mu(i,j_0)\right)}.
  \end{multline*}
  Since both $(D_0, \dots, D_m)$ and $\dset{X(j,k)}{j_0 < j \le m, 0 \le k \le b(j)}$ are known, we are able to determine
  \begin{multline}\label{r-ary}
    \sum_{k=0}^{b(j_0)}d\left(X(j_0,k), \sum_{i\preceq j_0}S(i,j_0,k)\mu(i,j_0)\right) \stackrel{\eqref{pa}}{=} \sum_{k=0}^{b(j_0)}d\left(X(j_0,k), r^k\sum_{i=1}^qw_iv_i\right) \\
    = \sum_{k=0}^{b(j_0)}r^k\sum_{i=1}^q w_id(X(j_0,k),v_i) = \sum_{k=0}^{b(j_0)}r^k\sum_{i=1}^q M_{X(j_0,k),v_i}w_i = \sum_{k=0}^{b(j_0)}r^k\cdot(Mw)_{X(j_0,k)}.
  \end{multline}
  Let $g = \gcd_{i < j}((Mw)_i-(Mw)_j)$. We can thus deduce from \eqref{r-ary} the value of
  \begin{equation}\label{r-ary-2}
    \sum_{k=0}^{b(j_0)}r^k\cdot \frac{1}{g}\left((Mw)_{X(j_0,k)}-\min Mw\right).
  \end{equation}
  Notice that, according to our choice of $w$ and \eqref{r_condition}, $\left(\tfrac{1}{g}((Mw)_i - \min Mw)\right)_{i=1}^q$ are distinct integers in $[0, r)$. The value of \eqref{r-ary-2} uniquely decides $\dset{X(j_0, k)}{0 \le k \le b(j_0)}$.

  Finally, we estimate $m + 1$, the cardinality of the resolving set. Our choice of $m$ implies that $m$ is the smallest integer such that $\sum_{j=0}^m \max\sset{b(j)+1,0} \ge n$. For every $x \in \N$, by Theorem~\ref{estimate_ns},
  \begin{equation} \label{sum_of_bs}
      \sum_{j=0}^x \max\sset{b(j)+1,0} > \sum_{j=0}^x (n(j)\log_r 2 - \log_r\abs{w}_1) = \tfrac{1}{2}x\log_r x - O(x\log \log x).
  \end{equation}
  One can check that $x = 2n/\log_rn + O(n\log\log n/\log^2n)$ ensures the right hand side of \eqref{sum_of_bs} is $\ge n$.
\end{proof}

\section{A lower bound on $m(G, n)$}\label{lb}

A straightforward generalization of the lower bound on the coin weighing problem by Erd\H{o}s and R\'enyi gives a lower bound on the metric dimension of $\gsn$ (see Moser~\cite{MR0263643} and Pippenger~\cite{MR0437347} for different proofs using the second moment method and the information-theoretic method).

\begin{theorem}\label{lbt}
  Given a connected graph $G$ on $q \ge 2$ vertices, for every $n\in \N$, the metric dimension $m(G, n)$ of $\gsn$ is at least \[
    \left(2-\OO{\frac{\log\log n}{\log n}}\right)\frac{n}{\log_q n}.
  \]
\end{theorem}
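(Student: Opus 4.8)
The plan is to use the information-theoretic (entropy) method attributed to Pippenger. Let $S\subseteq V^n$ be a resolving set of $\gsn$ of minimum cardinality $\abs{S}=m(G,n)$, so that $d_S\from V^n\to\N^S$ is injective. Draw $v$ uniformly at random from $V^n$ and, for each $s\in S$, set $Y_s := (d_S(v))_s = \sum_{i=1}^n d(v_i,s_i)$. Writing $H(\cdot)$ for the binary entropy, $H(v)=n\log_2 q$ since $v$ is uniform on a set of size $q^n$; and as $d_S$ is injective, the vector $(Y_s)_{s\in S}$ determines $v$, so $H\bigl((Y_s)_{s\in S}\bigr)=n\log_2 q$ as well. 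Subadditivity of entropy then gives the basic inequality
\[
  n\log_2 q\le\sum_{s\in S}H(Y_s).
\]
Everything reduces to showing that each term satisfies $H(Y_s)\le\tfrac12\log_2 n+O(1)$, i.e. roughly half of the trivial bound $\log_2(n\cdot\diam(G)+1)$ afforded by the size of the range of $Y_s$; it is exactly this factor $\tfrac12$ that upgrades the naive counting bound $m(G,n)\ge n/\log_q n$ to the one claimed.

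The saving comes from concentration. For fixed $s=(s_1,\dots,s_n)$ the coordinates $v_1,\dots,v_n$ are independent, so $Y_s$ is a sum of the $n$ independent integer-valued terms $d(v_i,s_i)\in\sset{0,1,\dots,D}$ with $D:=\diam(G)$; each term lies in an interval of length $D$ and so has variance at most $D^2/4$, whence $\sigma_s^2:=\mathrm{Var}(Y_s)\le nD^2/4$. I would then apply the entropy--variance inequality for integer-valued random variables: any $\Z$-valued $Y$ with variance $\sigma^2$ obeys $H(Y)\le\tfrac12\log_2\bigl(2\pi e(\sigma^2+\tfrac1{12})\bigr)$. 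This follows by smoothing: if $U$ is uniform on $[-\tfrac12,\tfrac12]$ and independent of $Y$, the density of $\tilde Y:=Y+U$ is piecewise constant, so its differential entropy equals $H(Y)$, while $\mathrm{Var}(\tilde Y)=\sigma^2+\tfrac1{12}$, and the Gaussian maximum-entropy property bounds the differential entropy by $\tfrac12\log_2(2\pi e\,\mathrm{Var}(\tilde Y))$. Substituting $\sigma_s^2\le nD^2/4$ yields $H(Y_s)\le\tfrac12\log_2 n+O(1)$, with the implied constant depending only on $G$.

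Combining the two estimates gives $n\log_2 q\le\abs{S}\bigl(\tfrac12\log_2 n+O(1)\bigr)$, hence
\[
  m(G,n)=\abs{S}\ge\frac{2n\log_2 q}{\log_2 n+O(1)}=\Bigl(2-\OO{\tfrac1{\log n}}\Bigr)\frac{n}{\log_q n},
\]
which implies the stated bound since $1/\log n=\OO{\log\log n/\log n}$. I expect no essential obstacle on this route: the only nonroutine ingredient is the entropy--variance inequality, and the variance estimate we use is the easy upper bound rather than a lower bound, so no nondegeneracy hypothesis on $G$ is needed. For contrast I would note Moser's second-moment approach, which instead shows that for uniform $v$ the image $d_S(v)$ lands with probability $1-o(1)$ in an axis-aligned box of side $\OO{\sqrt{n\log\abs{S}}}$ about its mean; this box must then contain $(1-o(1))q^n$ of the images, and the union bound over the $\abs{S}$ coordinates forces the extra $\sqrt{\log\abs{S}}$ factor, which is precisely what surfaces as the $\log\log n$ correction recorded in the statement.
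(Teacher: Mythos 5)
Your proof is correct, but it follows a genuinely different route from the paper's. The paper proves Theorem~\ref{lbt} by concentration plus counting: for each $s \in S$, Hoeffding's inequality shows that all but a $2/n^2$ fraction of $V^n$ has its distance-sum to $s$ within $\sqrt{n\ln n}\cdot\diam(G)$ of its mean; intersecting over $s \in S$, the injection $d_S$ maps at least $q^n\left(1 - 2m/n^2\right)$ vertices into a box of side $2\sqrt{n\ln n}\cdot\diam(G)$ in $\N^S$, and comparing cardinalities gives the bound. You instead take the information-theoretic route, which the paper cites (attributing it to Pippenger) only as an alternative: entropy is preserved under the injection $d_S$, subadditivity gives $n\log_2 q \le \sum_s H(Y_s)$, and the per-coordinate bound $H(Y_s) \le \tfrac12\log_2 n + O(1)$ follows from the maximum-entropy inequality $H(Y) \le \tfrac12\log_2\left(2\pi e\left(\sigma^2 + \tfrac{1}{12}\right)\right)$ for integer-valued $Y$, whose smoothing proof you state correctly, combined with the variance bound $\mathrm{Var}(Y_s) \le nD^2/4$. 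All steps check out. Your version buys two things: (i) a sharper error term $\OO{1/\log n}$ instead of $\OO{\log\log n/\log n}$ --- the $\log\log n$ in the paper is an artifact of the $\sqrt{\ln n}$ deviation width needed to survive the union bound over $S$, exactly as your closing remark diagnoses; and (ii) self-containedness, since the paper's proof invokes Theorem~\ref{ubt} to guarantee $m = O(n/\log n)$ when estimating the error term (a dependence that is removable, but present as written). What the paper's approach buys in exchange is elementarity: it needs only Hoeffding's inequality and a counting argument, with no entropy machinery.
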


\begin{proof}
  Let $S\subset V^n$ be a resolving set of $\gsn$ of size $m = m(G, n)$, where $V$ is the vertex set of $G$. We may assume without loss that $m = O(n)$. For every $s = (s_1, \dots, s_n)\in S$, let $X_1, \dots, X_n$ be independent random variables defined by $X_i = d(Y_i, s_i)$, where the independent random variables $Y_1, \dots, Y_n$ are chosen uniformly at random from $V$, and define
  \[
    A_s := \dset{(v_1, \dots, v_n)\in V^n}{\abs{\sum_{i=1}^n d(v_i, s_i) - \EE{\sum_{i=1}^nX_i}} < \sqrt{n\ln n}\cdot D},
  \]
  where $D$ is the diameter of the graph. Since each $X_i$ is bounded by $[0, D]$, Hoeffding's inequality provides an upper bound on the cardinality of the complement of $A_s$:
  \begin{multline*}
    \frac{\abs{V^n \setminus A_s}}{\abs{V^n}} = \pr{\abs{\sum_{i=1}^nd(Y_i, s_i)-\EE{\sum_{i=1}^nX_i}} \ge \sqrt{n\ln n}\cdot D} \\
    = \pr{\abs{\sum_{i=1}^nX_i-\EE{\sum_{i=1}^nX_i}} \ge \sqrt{n\ln n}\cdot D} \le 2\exp\left(-\frac{2\left(\sqrt{n\ln n}\cdot D\right)^2}{n\cdot D^2}\right) = \frac{2}{n^2}.
  \end{multline*}
  From the equivalent definition of a resolving set mentioned in Section~\ref{intro}, the function $d_S\from V^n\to\N^S$, defined by $(d_S(v_1, \dots, v_n))_s := d(v_1, s_1) + \dots + d(v_n,s_n)$ for every $(v_1, \dots, v_n)\in V^n$ and $s = (s_1, \dots, s_n) \in S$, is injective. Since the image of $\cap_{s\in S}A_s$ under $d_S$ is contained in a cube of side length $< 2\sqrt{n\ln n}\cdot D$ in $\N^S$, we obtain
  $$
    \left(2\sqrt{n\ln n}\cdot D\right)^m \ge \abs{\bigcap_{s\in S}A_s} \ge \abs{V^n} - \sum_{s\in S}\abs{V^n\setminus A_s} \ge q^n\left(1-\frac{2m}{n^2}\right) = q^n\left(1 - \OO{\frac{1}{n}}\right).
  $$
  Taking logarithm gives \[
    m \ge \frac{n\ln q - \OO{\frac{1}{n}}}{\frac{1}{2}\ln n + \OO{\log \log n}} = \frac{2n}{\log_q n}\cdot\frac{1-\OO{\frac{1}{n^2}}}{1 + \OO{\frac{\log\log n}{\log n}}} = \left(2 - \OO{\frac{\log\log n}{\log n}}\right)\frac{n}{\log_q n}.\qedhere
  \]
\end{proof}

\section{Asymptotically tight cases}\label{tight}

The bounds in Theorem~\ref{ubt} and Theorem~\ref{lbt} are asymptotically equivalent if and only if $r(G)$ defined by \eqref{parameter} equals $q$. We characterize the equality case.

\begin{lemma} \label{tfae}
  Given a connected graph $G$ on $q \ge 2$ vertices, let $M$ be the distance matrix of $G$. The following statements are equivalent.
  \begin{enumerate}[nosep]
    \item The technical parameter $r(G)$ defined by \eqref{parameter} equals $q$. \label{s1}
    \item There exists $w \in \Z^q$ such that $\sum_i w_i = 0$ and the vector $Mw$, after sorting its coordinates, is an arithmetic progression with nonzero common difference. \label{s2}
    \item There exists a permutation $\pi$ on $[q]$ such that \[
    \begin{pmatrix}
      \pi(1) \\
      \vdots \\
      \pi(q) \\
      0
    \end{pmatrix} \text{ is in the column space of }
    \begin{pmatrix}
      M & \one \\
      \one^T & 0
    \end{pmatrix},
  \] where the column space is understood as a subspace of $\Q^{q+1}$, and $\one$ is the $q$-dimensional all-ones column vector. \label{s3}
  \end{enumerate}
\end{lemma}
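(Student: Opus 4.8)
The plan is to run the equivalences as $\ref{s1}\Leftrightarrow\ref{s2}$ via an elementary counting fact, and $\ref{s2}\Leftrightarrow\ref{s3}$ via rescaling together with an unpacking of the column-space condition.

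First I would prove $\ref{s1}\Leftrightarrow\ref{s2}$. Fix any admissible $w$, meaning $w\in\Z^q$ with $\sum_i w_i = 0$ and the coordinates of $Mw$ pairwise distinct, and set $g = \gcd_{i<j}((Mw)_i - (Mw)_j)$. Then the $q$ numbers $((Mw)_i - \min Mw)/g$ are distinct nonnegative integers, so their maximum, which is exactly the ratio $(\max Mw - \min Mw)/g$ appearing in \eqref{parameter}, is at least $q-1$; this reproves $r(G)\ge q$. The load-bearing observation is that equality holds if and only if these $q$ integers are precisely $0,1,\dots,q-1$, which happens exactly when the sorted coordinates of $Mw$ form an arithmetic progression with common difference $g\neq 0$. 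Hence $r(G) = q$ if and only if some admissible $w$ attains the value $q-1$, which is precisely statement \ref{s2}.

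Next I would treat $\ref{s2}\Leftrightarrow\ref{s3}$ by first unpacking what membership in the column space means. A vector $(u, c)^T \in \Q^{q+1}$ lies in the column space of $\left(\begin{smallmatrix} M & \one \\ \one^T & 0\end{smallmatrix}\right)$ if and only if there exist $w\in\Q^q$ and $t\in\Q$ with $Mw + t\one = u$ and $\sum_i w_i = c$. Specializing to $u = (\pi(1),\dots,\pi(q))^T$ and $c = 0$, statement \ref{s3} asserts the existence of a rational $w$ with $\sum_i w_i = 0$ and a scalar $t\in\Q$ such that $Mw + t\one$ is a permutation of $(1,\dots,q)^T$; equivalently, the sorted coordinates of $Mw$ form the arithmetic progression $1-t, 2-t, \dots, q-t$ of common difference $1$.

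With this reformulation the two remaining implications are rescalings. For $\ref{s2}\Rightarrow\ref{s3}$, if an integer $w$ makes the sorted $Mw$ equal to $a, a+d, \dots, a+(q-1)d$ with $d\neq 0$, then $w' := w/d\in\Q^q$ still satisfies $\sum_i w'_i = 0$ and makes the coordinate set of $Mw'$ equal to $\{a/d + k : 0\le k \le q-1\}$, an AP of common difference $1$; choosing $t := 1 - a/d$ turns $Mw' + t\one$ into a permutation of $(1,\dots,q)^T$. Conversely, for $\ref{s3}\Rightarrow\ref{s2}$, given rational $w$ and $t$ as above, I would clear denominators: picking a positive integer $N$ with $Nw\in\Z^q$ yields an integer vector with $\sum_i (Nw)_i = 0$ whose image $M(Nw)$ sorts to an arithmetic progression of common difference $N\neq 0$, which is statement \ref{s2}. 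The routine parts are these two scalings and the linear-algebra unpacking; the one genuinely essential step is the counting fact that $q$ distinct nonnegative integers have maximum at least $q-1$ with equality iff they are $\{0,\dots,q-1\}$, since it is what pins $r(G) = q$ to the arithmetic-progression condition. The mild subtlety to watch is the gap between the \emph{integer} witness demanded by \ref{s2} and the \emph{rational} witness allowed by \ref{s3}, which is exactly what the clearing-of-denominators and division-by-$d$ steps reconcile.
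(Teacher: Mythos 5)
Your proof is correct and takes essentially the same route as the paper: the identical gcd/counting observation (distinct integers $((Mw)_i-\min Mw)/g$ force $\max Mw - \min Mw \ge (q-1)g$, with equality exactly at arithmetic progressions) handles the equivalence of Statements 1 and 2, and the equivalence of Statements 2 and 3 is the same linear-algebra unpacking with rescaling to pass between integer and rational witnesses. The only cosmetic difference is that the paper exhibits the identity $\left(\begin{smallmatrix} M & \one \\ \one^T & 0\end{smallmatrix}\right)\left(\begin{smallmatrix} w \\ -a\end{smallmatrix}\right) = b\,(\pi(1),\dots,\pi(q),0)^T$ and absorbs the scalar $b$ into the column-space membership, whereas you divide $w$ by the common difference first and solve for the shift $t$; these are the same computation performed in a different order.
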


\begin{proof}
  Let $w\in \Z^q$ be a vector such that $\sum_{i=1}^q w_i = 0$ and the coordinates of $Mw$ are distinct integers, and let $g := \gcd_{i < j}((Mw)_i-(Mw)_j)$. Clearly, $\max Mw - \min Mw \ge (q-1)g$, and equality holds if and only if the vector $Mw$, after sorting its coordinates, is an arithmetic progression with common difference $g > 0$. This shows the implication from Statement \ref{s1} to Statement \ref{s2}. The converse is evident.

  Lastly, we demonstrate the equivalence between Statement \ref{s2} and Statement \ref{s3}. Suppose that there exists $w \in \Z^n$ such that $\one^T w_i = 0$ and the vector $Mw$, after sorting its coordinates, is an arithmetic progression with nonzero common difference. Thus there exists $a, b \in \Z$ with $b\neq 0$ and a permutation $\pi$ such that \[
    Mw = \begin{pmatrix}
      a + b\pi(1) \\
      \vdots \\
      a + b\pi(q)
    \end{pmatrix} = a\one + b\begin{pmatrix}
      \pi(1) \\
      \vdots \\
      \pi(q)
    \end{pmatrix}.
  \] We obtain that \[
    \begin{pmatrix}
      M & \one \\
      \one^T & 0
    \end{pmatrix}
    \begin{pmatrix}
      w \\
      -a
    \end{pmatrix} = \begin{pmatrix}
      Mw - a\one \\
      0
    \end{pmatrix} = b\begin{pmatrix}
      \pi(1) \\
      \vdots \\
      \pi(q) \\
      0
    \end{pmatrix},
  \] which implies Statement~\ref{s3}. Reversing the argument, one can show that Statement~\ref{s3} indicates the existence of $w\in \Q^q$ satisfying the conditions in Statement~\ref{s2}. However, one can always scale $w$ properly so that it becomes a vector in $\Z^q$.
\end{proof}

\begin{corollary}
  Given a connected graph $G$ on $q\ge 2$ vertices, let $M$ be the distance matrix of $G$. If $G$ is a complete graph, a path, a cycle or a complete bipartite graph, or the matrix $$M' := \begin{pmatrix}M & \one \\ \one^T & 0\end{pmatrix}$$ is invertible, then the metric dimension $m(G, n)$ of $\gsn$ is
  \[
    \left(2+\OO{\frac{\log\log n}{\log n}}\right)\frac{n}{\log_q n}.
  \]
\end{corollary}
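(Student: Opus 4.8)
The plan is to reduce the statement to the single equality $r(G)=q$ and then verify that equality in each case. As noted at the start of Section~\ref{tight}, the bounds of Theorem~\ref{ubt} and Theorem~\ref{lbt} coincide asymptotically exactly when $r(G)=q$, so it suffices to prove $r(G)=q$ for each listed graph. By Lemma~\ref{tfae} this amounts to exhibiting a $w\in\Z^q$ with $\sum_i w_i=0$ whose image $Mw$ becomes an arithmetic progression of nonzero common difference after sorting (Statement~\ref{s2}), or equivalently to placing a permutation vector in the column space of $M'$ (Statement~\ref{s3}). Since one can always clear denominators as in Remark~\ref{finite}, it is enough to construct such a $w$ over $\Q$.

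Three cases are essentially immediate. If $M'$ is invertible, its column space is all of $\Q^{q+1}$, so $(1,\dots,q,0)^{T}$ lies in it and Statement~\ref{s3} holds with $\pi$ the identity. For the complete graph, $M=\one\one^{T}-I$ (with $I$ the identity), so $Mw=(\sum_i w_i)\one-w=-w$ whenever $\sum_i w_i=0$; taking the zero-sum progression $w_i=2i-(q+1)$ makes $-w$ a sorted progression of difference $2$. For the path on $\{1,\dots,q\}$ with $M_{ij}=\abs{i-j}$, the vector $w=e_1-e_q$ gives $(Mw)_i=(i-1)-(q-i)=2i-q-1$, which is already an arithmetic progression.

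For a cycle I would instead solve $Mw=v$, where $M$ is the circulant distance matrix and $v$ is a prescribed zero-sum arithmetic progression; since $\sum_i w_i=0$ is the same as $w\perp\one$, the only question is whether $v$ lies in $M(\one^{\perp})$. Diagonalizing the circulant $M$ in the Fourier basis, a direct computation shows that for $C_{2m}$ every even-indexed mode has eigenvalue $m+2\sum_{j=1}^{m-1}j\cos(2\pi\ell j/m)=0$, while the odd-indexed modes have nonzero (negative) eigenvalues. Hence $M(\one^{\perp})$ is all of $\one^{\perp}$ for an odd cycle and is exactly the anti-periodic subspace $\{v:v_{j+m}=-v_j\}$ for an even cycle. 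For an odd cycle any zero-sum progression is therefore attainable; for $C_{2m}$ I take $v=(2m-1,2m-3,\dots,1,-(2m-1),\dots,-1)$, which is anti-periodic and sorts to the symmetric progression of difference $2$, so it lies in the image, and its preimage $w$ is itself anti-periodic and hence zero-sum.

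For the complete bipartite graph $K_{s,t}$ with parts $A,B$, using $\sum_i w_i=0$ one finds $(Mw)_i=\sigma_A-2w_i$ for $i\in A$ and $(Mw)_i=-\sigma_A-2w_i$ for $i\in B$, where $\sigma_A=\sum_{i\in A}w_i$. I would put consecutive values $p,\dots,p+s-1$ on $A$ and $p',\dots,p'+t-1$ on $B$, so that the $A$-values and the $B$-values each form a progression of difference $-2$; requiring that these two blocks abut into a single progression, together with $\sigma_A=sp+\binom{s}{2}$ and $-\sigma_A=tp'+\binom{t}{2}$, gives three linear equations in $p,p',\sigma_A$ with a unique solution unless $(s-1)(t-1)=1$, i.e. unless $s=t=2$. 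The sole exception $K_{2,2}=C_4$ is already handled as a cycle, and in every other case one solves over $\Q$ and scales to $\Z$. The main obstacle is the even cycle: there both $M$ and $M'$ are singular, so no inversion is available, and the argument rests on correctly identifying $M(\one^{\perp})$ as the anti-periodic subspace and then choosing the one zero-sum progression that is anti-periodic; once $v$ is pinned down inside the image, the rest is routine linear algebra and clearing of denominators.
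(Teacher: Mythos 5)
Your proof is correct, and its skeleton --- reduce to $r(G)=q$ via Lemma~\ref{tfae}, then exhibit for each graph a zero-sum $w$ (over $\Q$, scaled to $\Z^q$ as in Remark~\ref{finite}) whose image $Mw$ sorts to an arithmetic progression with nonzero common difference --- is exactly the paper's. The invertible-$M'$ case, the complete graph and the path coincide with the paper's treatment (same vectors $w$, up to sign). Where you genuinely diverge is in the cycle and complete bipartite cases. For cycles, the paper simply writes down sparse explicit vectors $w$ (supported on at most three coordinates) and leaves verification to the reader, whereas you diagonalize the circulant distance matrix, identify $M(\one^{\perp})$ as all of $\one^{\perp}$ for odd cycles and as the anti-periodic subspace $\dset{v}{v_{j+m}=-v_j}$ for $C_{2m}$, and pull back a suitable anti-periodic progression; this is more systematic (it explains \emph{why} a solution exists rather than exhibiting one) but heavier. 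One expositional caveat: your displayed eigenvalue computation covers only $C_{2m}$; the odd-cycle claim that every nontrivial Fourier mode has nonzero eigenvalue is true (the eigenvalues are $-\tfrac{1}{4}\sec^{2}(\pi k/q)$, $k=1,\dots,q-1$) but requires its own parallel computation --- it does not follow, as your ``hence'' suggests, from the even-cycle identity. For complete bipartite graphs the paper proves invertibility of $M'$ for $q_1\neq 2$ by Schur complements and invokes Statement~\ref{s3}, while you construct $w$ directly by solving a $3\times 3$ linear system whose determinant is $s+t-st = 1-(s-1)(t-1)$; your route is more elementary, produces $w$ explicitly, and degenerates exactly when $s=t=2$, so both arguments must (and do) special-case $K_{2,2}=C_4$ as a cycle.
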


\begin{proof}
  When $M'$ is invertible, Statement~\ref{s3} in Lemma~\ref{tfae} applies here. When $G$ is a complete graph, a path or a cycle, by Statement~\ref{s2} in Lemma~\ref{tfae}, it suffices to construct a vector $w\in \Z^q$ such that $\sum_i w_i = 0$ and the vector $Mw$, after sorting its coordinates, is an arithmetic progression with nonzero common difference. We list the construction of $w$ in Table~\ref{vector_construction} and leave the verification to the readers.

  \begin{table}
    \centering
    {\renewcommand{\arraystretch}{1.5}
    \begin{tabular}{cccc}
     complete graph & path & even cycle & odd cycle \\
     \hline
     $w_i = 2i-(q+1)$ & $w_i = \begin{cases}
       -1 & \text{if }i=1 \\
       1 & \text{if }i=q \\
       0 & \text{otherwise}
     \end{cases}$ & \rule{0pt}{50pt} $w_i = \begin{cases}
       +1 & \text{if }i=1 \\
       -\frac{q+2}{2} & \text{if }i=\frac{q}{2} \\
       \frac{q}{2} & \text{if }i=\frac{q+2}{2} \\
       0 & \text{otherwise}
     \end{cases}$ & $w_i = \begin{cases}
       \frac{q-3}{2} & \text{if }i=\frac{q+1}{2} \\
       \frac{q-1}{2} & \text{if }i=q \\
       -1 & \text{otherwise}
     \end{cases}$
    \end{tabular}
    \caption{Construction of $w \in \Z^q$ for complete graphs, paths and cycles.}\label{vector_construction}}
  \end{table}

  %



  Lastly, because $K_{2,2}$ is a cycle of length $4$, for a complete bipartite graph $G = K_{q_1, q_2}$, it suffices to check that $M'$ is invertible for $q_1 \neq 2$. Denote by $J_q$ the $q$-dimensional all-ones matrix, and by $I_q$ the $q$-dimensional identity matrix. Recall that $J_{q_1}$ has eigenvalues $0$ and $q_1$. As $q_1 \neq 2$, $J_{q_1} - 2I_{q_1}$ is invertible and $(J_{q_1} - 2I_{q_1})\one = (q_1-2)\one$, hence $\one^T(J_{q_1}-2I_{q_1})^{-1}\one = \frac{q_1^2}{q_1-2}$.
  Using row operations and Schur complements\footnote{Suppose $M = \begin{pmatrix}
    A & B \\ C & D
  \end{pmatrix}$ is a block matrix and $A$ is invertible. The Schur complement of the block $A$ is $M/A := D - CA^{-1}B$, which gives rise to the matrix equivalence $M \sim \begin{pmatrix}
    A & O \\ O & M/A
  \end{pmatrix}$.}, we have the following matrix equivalence:
  \begin{multline*}
    \begin{pmatrix}
      M & \one \\
      \one^T & 0
    \end{pmatrix} = \begin{pmatrix}
      2J_{q_1}-2I_{q_1} & J & \one \\
      J & 2J_{q_2}-2I_{q_2}& \one \\
      \one^T & \one^T & 0
    \end{pmatrix} \sim \begin{pmatrix}
      J_{q_1}-2I_{q_1} & O & \one \\
      O & J_{q_2}-2I_{q_2} & \one \\
      \one^T & \one^T & 0
    \end{pmatrix} \\
    \sim \begin{pmatrix}
      J_{q_1}-2I_{q_1} & O & \zero \\
      O & J_{q_2}-2I_{q_2} & \one \\
      \zero^T & \one^T & -\frac{q_1^2}{q_1-2}
    \end{pmatrix} \sim \begin{pmatrix}
      J_{q_1}-2I_{q_1} & O & \zero \\
      O & \left(1+\frac{q_1-2}{q_1^2}\right)J_{q_2}-2I_{q_2} & \zero \\
      \zero^T & \zero^T & -\frac{q_1^2}{q_1-2}
    \end{pmatrix}.
  \end{multline*}
  Notice that $\left(1+\frac{q_1-2}{q_1^2}\right)J_{q_2}-2I_{q_2}$ has eigenvalues $\left(1+\frac{q_1-2}{q_1^2}\right)q_2-2$ and $-2$, which are nonzero. Therefore $M'$ is invertible for a complete bipartite graph.
\end{proof}

\begin{remark}
  Seb\H{o} and Tannier~\cite[Section 1]{MR2065985} claimed that $m(P_q, n) \le (2+o(1))n/\log_q n$, where $P_q$ is the path on $q$ vertices, and they thought ``this upper bound is probably the asymptotically correct value''. Our result confirms their conjecture.
\end{remark}

\section{Open problems}\label{concluding}

Statement~\ref{s3} in Lemma~\ref{tfae} allows us to search for connected graphs $G$ on $q$ vertices with $r(G) > q$. For each connected graph $G$ on $q$ vertices, we check if the system of equations \[
  \begin{pmatrix}
    M & \one \\
    \one^T & 0
  \end{pmatrix}\begin{pmatrix}
    x_1 \\
    \vdots \\
    x_q \\
    x_{q+1}
  \end{pmatrix} = \begin{pmatrix}
    \pi(1) \\
    \vdots \\
    \pi(q) \\
    0
  \end{pmatrix}
\] has a solution for some permutation $\pi$ on $[q]$. Using McKay's dataset~\cite{mckay} of connected graphs on up to 10 vertices, we find 1 graph on 6 vertices, 4 graphs on 9 vertices and 1709 graphs on 10 vertices for which $r(G) > q$.

\begin{figure}[t]
  \centering
  \begin{tikzpicture}[thick, scale=0.5]
    \coordinate (1) at (90:2);
    \coordinate (2) at (150:2);
    \coordinate (3) at (210:2);
    \coordinate (4) at (270:2);
    \coordinate (5) at (330:2);
    \coordinate (6) at (30:2);
    \draw[darkgray] (1) -- (3);
    \draw[darkgray] (1) -- (4);
    \draw[darkgray] (1) node[vertex]{} -- (5);
    \draw[darkgray] (2) -- (3);
    \draw[darkgray] (2) -- (4);
    \draw[darkgray] (2) node[vertex]{} -- (5);
    \draw[darkgray] (6) -- (3);
    \draw[darkgray] (6) -- (4);
    \draw[darkgray] (6) node[vertex]{} -- (5);
    \draw[darkgray] (3) -- (5);
    \draw[darkgray] (3) node[vertex]{} -- (4);
    \draw[darkgray] (4) node[vertex]{} -- (5) node[vertex]{};
  \end{tikzpicture}\qquad
  \begin{tikzpicture}[thick, scale=0.5]
    \coordinate (0) at (140:2);
    \coordinate (1) at (40:2);
    \coordinate (2) at (-90:1.5);
    \coordinate (3) at (130:3);
    \coordinate (4) at (50:3);
    \coordinate (5) at (190:2);
    \coordinate (6) at (-10:2);
    \coordinate (7) at (90:2);
    \coordinate (8) at (0,0);
    \draw[darkgray] (8) -- (1) -- (7) -- (0) -- (5) -- (2) -- (6) -- (1);
    \draw[darkgray] (2) node[vertex]{} -- (8) -- (7) -- (3) -- (5) node[vertex]{} -- (8);
    \draw[darkgray] (1) node[vertex]{} -- (4) -- (6) node[vertex]{} -- (8) node[vertex]{} -- (0) node[vertex]{} -- (3) node[vertex]{};
    \draw[darkgray] (7) node[vertex]{} -- (4) node[vertex]{};
  \end{tikzpicture}\qquad
  \begin{tikzpicture}[thick, scale=0.46]
    \coordinate (0) at (90:2);
    \coordinate (1) at (150:2);
    \coordinate (2) at (30:2);
    \coordinate (3) at (0,0);
    \coordinate (4) at (0,-1);
    \coordinate (5) at (210:3);
    \coordinate (6) at (-30:3);
    \coordinate (7) at (-120:3);
    \coordinate (8) at (-60:3);
    \draw[darkgray] (0) -- (1) -- (7) -- (8) -- (2) -- (0) node[vertex]{} -- (3) -- (4) -- (7) -- (5) -- (4) -- (8) -- (6) -- (4) node[vertex]{};
    \draw[darkgray] (1) -- (2) node[vertex]{} -- (3) node[vertex]{} -- (1) node[vertex]{};
    \draw[darkgray] (5) node[vertex]{} -- (8) node[vertex]{};
    \draw[darkgray] (6) node[vertex]{} -- (7) node[vertex]{};
  \end{tikzpicture}\qquad
  \begin{tikzpicture}[thick, scale=0.44]
    \coordinate (1) at (90:3);
    \coordinate (2) at (120:3);
    \coordinate (3) at (60:3);
    \coordinate (4) at (90:1.5);
    \coordinate (5) at (150:2);
    \coordinate (6) at (30:2);
    \coordinate (7) at (210:2);
    \coordinate (8) at (-30:2);
    \coordinate (9) at (-90:2);
    \draw[darkgray] (1) -- (2) node[vertex]{} -- (5) -- (1);
    \draw[darkgray] (1) -- (3) node[vertex]{} -- (6) -- (1);
    \draw[darkgray] (4) -- (5) -- (7) -- (9) -- (8) -- (6) -- (4) -- (9);
    \draw[darkgray] (4) -- (7) -- (8) -- (4) node[vertex]{} -- (1) -- (7) node[vertex]{} -- (6);
    \draw[darkgray] (1) node[vertex]{} -- (8) node[vertex]{} -- (5) node[vertex]{} -- (9) node[vertex]{} -- (6) node[vertex]{};
  \end{tikzpicture}\qquad
  \begin{tikzpicture}[thick, scale=0.45]
    \coordinate (1) at (0:2);
    \coordinate (2) at (45:2);
    \coordinate (3) at (90:2);
    \coordinate (4) at (135:2);
    \coordinate (5) at (180:2);
    \coordinate (6) at (-135:2);
    \coordinate (7) at (-90:2);
    \coordinate (8) at (-45:2);
    \coordinate (9) at (90:3);
    \draw[darkgray] (1) -- (2) -- (3) -- (4) -- (5) -- (6) -- (7) -- (8) -- (1) -- (5) -- (2) -- (8) -- (6) -- (4) -- (2) node[vertex]{} -- (9) -- (4) node[vertex]{} -- (1) -- (6) node[vertex]{};
    \draw[darkgray] (8) node[vertex]{} -- (5) -- (7) -- (1) node[vertex]{} -- (3) -- (5) node[vertex]{};
    \draw[darkgray] (7) node[vertex]{} -- (3) node[vertex]{} -- (9) node[vertex]{};
  \end{tikzpicture}
  \caption{All the connected graphs on $2 \le q \le 9$ vertices with $r(G) > q$.}
\end{figure}
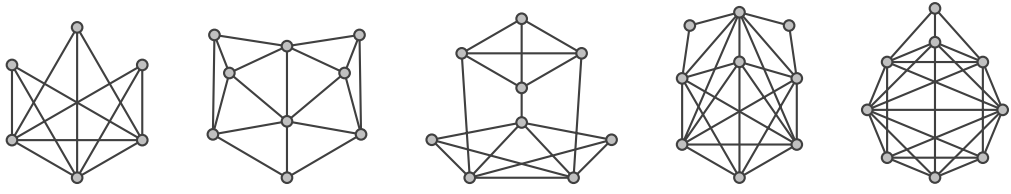

The graph on $6$ vertices is $K_6\setminus K_3$. We give a simple argument for $r(K_6\setminus K_3) = 7$ in Appendix~\ref{app_a}. We believe that our construction of a resolving set can be significantly improved for such graphs.

\begin{conjecture} \label{ca}
  Given a connected graph $G$ on $q \ge 2$ vertices, the metric dimension $m(G, n)$ of $\gsn$ is $(2+o(1))n/\log_q n$. In particular, $m(K_6\setminus K_3, n) = (2+o(1))n/\log_6 n$.
\end{conjecture}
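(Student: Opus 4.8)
Since Theorem~\ref{lbt} already supplies the matching lower bound $\left(2-\OO{\log\log n/\log n}\right)n/\log_q n$ unconditionally, the whole content of the conjecture is to improve the upper bound of Theorem~\ref{ubt} from $\log_{r(G)}n$ down to $\log_q n$ for the graphs with $r(G)>q$, the smallest being $K_6\setminus K_3$ with $r=7$. The plan is first to pinpoint, and then to remove, the source of inefficiency in the proof of Theorem~\ref{ubt}. At level $j$ that construction packs only $b(j)\approx n(j)\log_r 2$ symbols because the quantity recovered by M\"obius inversion is the \emph{single} integer $\sum_k r^k (Mw)_{X(j_0,k)}$, a radix-$r$ expansion whose digits $(Mw)_X$, after normalization, occupy only $q$ of the $r$ available residues. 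The binding constraint \eqref{eqbj} is thus $r^{b(j)}\lesssim 2^{n(j)}$, and it is precisely the non-arithmetic spread of the digit set --- forced whenever $r(G)>q$ by Lemma~\ref{tfae} --- that blocks radix $q$.

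The structural observation I would exploit is that the M\"obius machinery never uses a single weight vector. Replacing \eqref{pa} by $\sum_{i\preceq j}S(i,j,k)\mu(i,j)=c_{j,k}\sum_i w^{(j,k)}_i v_i$ with per-column coefficients $c_{j,k}\in\N$ and per-column vectors $w^{(j,k)}\in\Z^q$ (still with $\sum_i w^{(j,k)}_i=0$) leaves \eqref{pb} and the entire decoding argument untouched, since those depend only on the semilattice structure; the sole feasibility requirement becomes $c_{j,k}\abs{w^{(j,k)}}_1\le 2^{n(j)}$ for each column. Under this generalization the integer recovered at level $j_0$ is $\sum_{k}c_{j_0,k}(Mw^{(j_0,k)})_{X(j_0,k)}$, so the problem reduces to a numeration question about the fixed matrix $M$: what is the infimal base $\kappa=\kappa(G)$ for which one can injectively encode $[q]^b$ by a map $(a_k)\mapsto\sum_{k<b}c_k a_k$, where each $a_k$ ranges over a scaled value set $\dset{(Mw)_v}{v}$ of $M$, into integers of magnitude $\kappa^{(1+o(1))b}$? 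A volume count gives $\kappa(G)\ge q$, a single arithmetic digit set gives $\kappa(G)\le r(G)$, and rerunning the Bellman--Shapiro estimate (Theorem~\ref{estimate_ns}) with $b(j)\approx n(j)\log_\kappa 2$ would yield the upper bound $m(G,n)\le\left(2+\OO{\log\log n/\log n}\right)n/\log_{\kappa(G)}n$. Within this approach the conjecture therefore follows once $\kappa(G)=q$ is established.

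Proving $\kappa(G)=q$ is the main obstacle, and it is genuinely a problem in additive combinatorics rather than in graph theory. Achieving base exactly $q$ with a single digit set would require that set to be a complete residue system, i.e.\ an arithmetic progression, which Lemma~\ref{tfae} forbids; one is thus forced to vary the digit sets across positions and to amortize the ``arithmetic defect'' of a non-AP value set of $M$ over long blocks, showing it contributes only a subexponential, rather than a geometric, factor to the range. I would attempt this for $K_6\setminus K_3$ first, where the value sets are explicit $6$-element sets of small diameter, by searching for a block numeration system, equivalently a tiling of an interval of length $q^{(1+o(1))\ell}$ by affine images of these sets; a positive answer there already settles the highlighted special case. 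An appealing alternative, specific to $K_6\setminus K_3$, is perturbative: since its metric exceeds that of $K_6$ only on the three vertex-pairs forming the deleted triangle, I would start from the base-$6$ resolving set of $K_6^{\square n}$ that Theorem~\ref{ubt} produces (as $r(K_6)=6$) and correct for the resulting bounded-rank perturbation of the distance matrix using only $\OO{n\log\log n/\log^2 n}$ additional test vertices, so that the leading constant is undisturbed. I expect the amortization (tiling) step to be the crux in either approach.
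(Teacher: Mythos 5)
You were asked to prove Conjecture~\ref{ca}, which the paper itself leaves open: the paper proves only the unconditional lower bound (Theorem~\ref{lbt}), the upper bound in base $r(G)$ (Theorem~\ref{ubt}), and the fact $r(K_6\setminus K_3)=7>6$ (Appendix~\ref{app_a}), and it states the conjecture precisely because its method stalls at $r(G)$. So there is no paper proof to match, and your proposal must stand alone --- which it does not, as you concede (``the main obstacle'', ``I expect \dots to be the crux''). The part of your argument that is sound is the reduction: the constraint~\eqref{eqbj} is indeed the bottleneck, and the M\"obius machinery does tolerate per-column data $(c_{j,k}, w^{(j,k)})$, since \eqref{pb} and Lemma~\ref{mobius} use only the semilattice structure, feasibility of the analogue of~\eqref{pa} needs only $\sum_i w^{(j,k)}_i=0$ and $c_{j,k}\abs{w^{(j,k)}}_1\le 2^{n(j)}$, and the decoding then recovers $\sum_k c_{j_0,k}\bigl(Mw^{(j_0,k)}\bigr)_{X(j_0,k)}$. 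But the injectivity statement you call $\kappa(G)=q$ --- that $[q]^{b}$ can be encoded by such sums into a range of size $q^{(1+o(1))b}$ --- is, after this reformulation, exactly the upper-bound half of the conjecture, and you prove it for no graph with $r(G)>q$, not even $K_6\setminus K_3$. Lemma~\ref{tfae} shows that a single digit set achieves base $q$ only when it is an arithmetic progression, i.e.\ only when $r(G)=q$; hence the ``amortization of the arithmetic defect over long blocks'' is not a finishing touch but the entire difficulty, and nothing in the proposal indicates why mixing digit sets across positions should reduce the loss from the geometric factor $(r/q)^{b}$ to a subexponential one. The proposal is a (useful) restatement of the conjecture, not a proof of it.

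The perturbative route for $K_6\setminus K_3$ has a concrete failure mode as well. Writing $d_{K_6\setminus K_3}=d_{K_6}+e$, where $e(u,v)=1$ exactly when $u\ne v$ both lie in the deleted triangle, a test vertex $s$ in the product reports $\sum_j d_{K_6}(x_j,s_j)$ plus the correction $\#\dset{j}{x_j\ne s_j,\ x_j,s_j\text{ in the triangle}}$. This correction term takes only $n+1$ values, while a resolving set of $K_6^{\square n}$ separates the $6^n$ vertices with no slack at all in the counting argument of Theorem~\ref{lbt}; a single shared additive shift can therefore merge exponentially many pairs of distance vectors, and ``bounded rank'' of the perturbation of the $6\times 6$ matrix $M$ gives no control over how many pairs collide in the $n$-fold product. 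No mechanism is offered by which $\OO{n\log\log n/\log^2 n}$ extra vertices could repair all such collisions, and the lower-bound argument of the paper suggests none exists cheaply: any repair set must itself re-separate classes of size that can be exponential in $n$. Both routes thus terminate at an unproven combinatorial core, which is the open problem itself.
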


In the proofs of Theorem~\ref{ubt} and Theorem~\ref{lbt}, we have made use of one property of graph distance, that is, it is integer valued. In addition, we used some other properties of graph distance in Remark~\ref{finite} just to show that Theorem~\ref{ubt} is not vacuously true for any connected graph. In this sense, our results are more related to integer matrices than graphs.

\begin{definition}
  Given a $p\times q$ integer matrix $M$ and $n\in \N$, $m(M, n)$ is the minimum cardinality of a subset $S$ of $[q]^n$ such that $M_S\from [p]^n \to \N^S$, defined by $(M_S(i_1, \dots, i_n))_s = M_{i_1,s_1} + \dots + M_{i_n,s_n}$ for every $(i_1, \dots i_n)\in [p]^n$ and $s = (s_1, \dots, s_n) \in S$, is an injection.
\end{definition}

If the difference between two rows of $M$, say the first two, is parallel to $\one^T$, then for every $n \ge 2$ and $S \subset [q]^n$, $M_S(1,2,1,1,\dots) = M_S(2,1,1,1,\dots)$, and so $m(M, n) = \infty$. Otherwise, Theorem~\ref{ubt} and Theorem~\ref{lbt} generalize to integer matrices naturally.

\begin{theorem} \label{btm}
  Given a $p \times q$ integer matrix $M$ with $p \ge 2$, if none of the differences between two rows of $M$ is parallel to $\one^T$, then for every $n\in\N$,
  \[
    \left(2-\OO{\frac{\log \log n}{\log n}}\right)\frac{n}{\log_p n} \le m(M, n) \le \left(2+\OO{\frac{\log \log n}{\log n}}\right)\frac{n}{\log_r n},
  \] where $r = r(M)$ is defined by \[
    r(M) = \min\dset{\frac{\max Mw - \min Mw}{\gcd_{i < j}((Mw)_i-(Mw)_j)}}{w\in \Z^q, \sum_i w_i = 0, (Mw)_i \neq (Mw)_j \text{ for all }i \neq j} + 1.
  \]
\end{theorem}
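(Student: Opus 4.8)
The plan is to observe that the proofs of Theorem~\ref{ubt} and Theorem~\ref{lbt} used graph distance only through the facts that it is integer valued and bounded, so both carry over to an arbitrary integer matrix $M$ once we replace the vertex set by the column index set $[q]$, the objects to be resolved by the row index set $[p]$, and each distance by the corresponding entry of $M$. The sole place that genuinely exploited graph structure is Remark~\ref{finite}, which ensures $r(G) < \infty$; this is exactly where the hypothesis on row differences must take over, and it is the only genuinely new point in the argument.

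First I would establish that $r(M) < \infty$, i.e. that the feasible set in its definition is nonempty. Writing $M_i$ for the $i$th row, the equation $(Mw)_i = (Mw)_j$ reads $(M_i - M_j)w = 0$. Restricted to the hyperplane $H := \dset{w \in \Q^q}{\sum_i w_i = 0}$, the set $\dset{w \in H}{(M_i-M_j)w = 0}$ is a proper subspace of $H$ unless $M_i - M_j$ vanishes on all of $H$, that is, unless $M_i - M_j \in \mathrm{span}(\one^T)$ — precisely the excluded case that $M_i - M_j$ is parallel to $\one^T$. Under the hypothesis each of the finitely many sets $\dset{w \in H}{(Mw)_i = (Mw)_j}$, $i \neq j$, is therefore a codimension-$1$ subspace of $H$, and since a vector space over the infinite field $\Q$ is never a finite union of proper subspaces, there is $w \in H$ with $(Mw)_i \neq (Mw)_j$ for all $i \neq j$. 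Scaling by a common denominator makes $w$ an integer vector while preserving $\sum_i w_i = 0$ and the distinctness, so $r(M) < \infty$.

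Next I would transcribe the upper bound. With such a $w$ fixed, the construction of the matrix $S$ (now with entries in $[q]$), the integers $b(j)$, the index set $J$, and the properties \eqref{pa}--\eqref{pb} are unchanged, as is the appeal to Lemma~\ref{mobius}. Extending $M$ to a bilinear pairing between formal $\Q$-combinations of rows and of columns lets the decoding computation \eqref{r-ary} run verbatim with $(Mw)_{X(j_0,k)}$ in place of the graph quantity; the distinctness of the $(Mw)_i$ together with \eqref{r_condition} again recovers each $X(j_0,k)$ from one $r$-ary digit, and the cardinality estimate via Theorem~\ref{estimate_ns} is identical, giving $m(M,n) \le \left(2 + \OO{\log\log n/\log n}\right) n/\log_r n$.

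Finally I would adapt the lower bound. Here the relevant alphabet for the objects being resolved is $[p]$, so I would repeat the proof of Theorem~\ref{lbt} with $Y_i$ uniform on $[p]$ and $X_i = M_{Y_i, s_i}$ bounded by the range $\max_{a,b} M_{a,b} - \min_{a,b} M_{a,b}$ in place of $\diam(G)$, and with the count $q^n$ replaced by $p^n$. Hoeffding's inequality still yields $\abs{[p]^n \setminus A_s}/p^n \le 2/n^2$ because that range cancels exactly as the diameter did; the matrix upper bound just proved supplies $m = \OO{n/\log n}$, and the same volume comparison followed by taking logarithms gives $m(M,n) \ge \left(2 - \OO{\log\log n/\log n}\right) n/\log_p n$. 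Everything outside the finiteness of $r(M)$ is a routine change of notation, so that is where I would concentrate the exposition.
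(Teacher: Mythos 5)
Your proposal is correct and follows essentially the same route as the paper, which proves Theorem~\ref{btm} by observing that the arguments for Theorem~\ref{ubt} and Theorem~\ref{lbt} use only integrality and boundedness of the entries, and that the finiteness argument of Remark~\ref{finite} survives with the hypothesis that no row difference is parallel to $\one^T$ replacing the graph-distance property $(M_i - M_j)_i + (M_i - M_j)_j = 0$. Your identification of the finiteness of $r(M)$ as the only point needing a genuinely new justification, and your handling of the lower bound with alphabet $[p]$ and the entry range in place of $\diam(G)$, match the paper's intent exactly.
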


\begin{remark}
  The same argument in Remark~\ref{finite} shows that $p \le r(M) < \infty$ in Theorem~\ref{btm}.
\end{remark}

It is conceivable that the generalization of Conjecture~\ref{ca} to integer matrices holds.

\begin{conjecture}
  Given a $p\times q$ integer matrix $M$ with $p \ge 2$, if none of the differences between two rows of $M$ is parallel to $\one^T$, then $m(M,n) = (2+o(1))n/\log_p n$.
\end{conjecture}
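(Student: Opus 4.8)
The plan is to show that the proofs of Theorem~\ref{ubt} and Theorem~\ref{lbt} used only the integrality of the matrix entries together with the finiteness of $r$, and that the sole novelty here is the bookkeeping forced by allowing $p \ne q$. Throughout I keep two disjoint sets of formal symbols: row symbols $u_1, \dots, u_p$ indexing $[p]$ (the alphabet of the hidden vector) and column symbols $v_1, \dots, v_q$ indexing $[q]$ (the alphabet of the queries), and I extend $M$ to the bilinear pairing $d(u_i, v_j) = M_{ij}$ on $\Q^p \times \Q^q$. First I would record that $r(M) < \infty$: the hypothesis that no difference $M_i - M_j$ of two rows is parallel to $\one^T$ means that for each $i \ne j$ the functional $w \mapsto (M_i-M_j)w$ is not proportional to $w \mapsto \sum_k w_k$, so $(Mw)_i = (Mw)_j$ cuts out a proper subspace of $\dset{w \in \Q^q}{\sum_k w_k = 0}$; since finitely many proper subspaces cannot cover a $\Q$-vector space, there is $w \in \Z^q$ (after scaling) with $\sum_k w_k = 0$ and the $p$ coordinates of $Mw$ pairwise distinct, exactly as in Remark~\ref{finite}.

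For the upper bound I would replay the M\"obius construction verbatim. Fixing $w$ as above attaining \eqref{r_condition} with $r = r(M)$, indexing the $n$ coordinates by the lexicographically first $n$ pairs $(j,k)$ with $0 \le k \le b(j)$, and building the array $S$ with entries in $[q]$ satisfying \eqref{pa} and \eqref{pb} in $\Z[v_1, \dots, v_q]$ is a purely formal step that never sees $M$, hence is unchanged. In the decoding step the hidden vector is $X \colon J \to [p]$, and applying $\sum_{i \preceq j_0}(\cdot)\,\mu(i,j_0)$ to the received vector $D$ isolates, via \eqref{pb} and \eqref{pa}, the quantity $\sum_{k=0}^{b(j_0)} r^k (Mw)_{X(j_0,k)}$ exactly as in \eqref{r-ary}, the subscript $X(j_0,k)$ now ranging over the $p$ rows. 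Because the $p$ numbers $\tfrac1g\bigl((Mw)_i - \min Mw\bigr)$ are distinct integers in $[0,r)$, the $r$-ary digits recover $\dset{X(j_0,k)}{0 \le k \le b(j_0)}$, and the cardinality estimate $(2 + O(\log\log n/\log n))\,n/\log_r n$ follows from the same appeal to Theorem~\ref{estimate_ns}.

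For the lower bound I would rerun the argument of Theorem~\ref{lbt}, replacing $\diam(G)$ by the entry range $\Delta := \max_{i,j} M_{ij} - \min_{i,j} M_{ij}$ and the vertex count $q^n$ by the message count $p^n$. For a resolving set $S \subset [q]^n$ of size $m$, drawing $Y_1, \dots, Y_n$ uniformly from $[p]$ makes each $M_{Y_\ell, s_\ell}$ lie in an interval of length $\Delta$, so Hoeffding's inequality leaves all but a $2/n^2$ fraction of $[p]^n$ inside the concentration set $A_s$; since $M_S$ is injective and $m = O(n/\log n)$ by the upper bound just proved, the image of $\bigcap_s A_s$ sits in a cube of side $< 2\sqrt{n\ln n}\,\Delta$, giving $(2\sqrt{n\ln n}\,\Delta)^m \ge p^n\bigl(1 - O(1/(n\log n))\bigr)$ and hence $m \ge (2 - O(\log\log n/\log n))\,n/\log_p n$ after taking logarithms.

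The only genuine point to check---and the step I expect to need the most care---is the clean separation of the single alphabet $V$ of Section~\ref{ub} into the query alphabet $[q]$, where the columns of $S$ and the symbols $v_\ell$ live, and the message alphabet $[p]$, where $X$ and the coordinates of $Mw$ live. One must confirm that the distinctness and $\gcd$ conditions defining $r(M)$ are imposed on the $p$ coordinates of $Mw$, while the formal identities \eqref{pa} and \eqref{pb} are imposed over the $q$ column symbols, so that the upper bound carries the base $r = r(M)$ (with $r(M) \ge p \ge 2$, which keeps $m = O(n/\log n)$) and the lower bound carries the base $p$. Everything else is the verbatim bookkeeping of the two proofs.
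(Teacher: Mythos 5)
You have not proved the statement; what your proposal establishes is Theorem~\ref{btm}, and the statement in question is strictly stronger. Your upper bound comes out in base $r = r(M)$ and your lower bound in base $p$, and nothing in your argument closes the gap between them: the asserted asymptotics $m(M,n) = (2+o(1))n/\log_p n$ requires the two bases to coincide, i.e.\ $r(M) = p$, since for fixed constants $r > p$ one has $n/\log_r n = (\log_p r)\cdot n/\log_p n$ with $\log_p r > 1$. The hypothesis that no row difference of $M$ is parallel to $\one^T$ only guarantees $r(M) < \infty$ (the Remark~\ref{finite} argument you correctly adapt), not $r(M) = p$. And $r(M) > p$ genuinely happens: the distance matrix of $K_6\setminus K_3$ is a $6\times 6$ integer matrix none of whose row differences is parallel to $\one^T$, yet the Proposition in Appendix~\ref{app_a} shows $r(M) = 7$. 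For that matrix your argument yields only $m(M,n) \le (2+o(1))n/\log_7 n$, which misses the conjectured $(2+o(1))n/\log_6 n$ by the constant factor $\log_6 7$.

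This is not a bookkeeping issue to be checked carefully; it is the entire content of the statement, which the paper itself poses as an open conjecture (the matrix analogue of Conjecture~\ref{ca}) precisely because its own technique---which your proposal reproduces faithfully---cannot do better than base $r(M)$. By (the matrix analogue of) Lemma~\ref{tfae}, having $r(M) = p$ is equivalent to the existence of $w \in \Z^q$ with $\sum_i w_i = 0$ such that the sorted coordinates of $Mw$ form an arithmetic progression with nonzero common difference, i.e.\ the normalized coordinates $\tfrac{1}{g}\left((Mw)_i - \min Mw\right)$ fill out exactly $\sset{0, 1, \dots, p-1}$. When no such $w$ exists, the M\"obius-function decoding inherently wastes part of the digit alphabet $[0, r)$, and the paper explicitly says the construction would have to be ``significantly improved'' to repair this. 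A genuine proof of the conjecture must supply a new idea---a different construction or decoding scheme that achieves base $p$ without an arithmetic-progression weighting---and your proposal contains no such idea.
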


\section*{Acknowledgements}
We are grateful to the referees for their close reading of this paper and many helpful comments.

\bibliographystyle{alpha}
\bibliography{similarity_code}

\appendix

\section{Proof of $r(K_6\setminus K_3) = 7$}\label{app_a}

\begin{proposition}
  Let $M$ be the distance matrix of $K_6\setminus K_3$. For every $w\in \Z^6$ such that $\sum_i w_i = 0$, the vector $Mw$, after sorting its coordinates, is never an arithmetic progression with nonzero common difference. Moreover, there exists $w\in\Z^6$ such that $\sum_i w_i = 0$ and the coordinates of $Mw$ consist of $0, 2, 3, 4, 5, 6$.
\end{proposition}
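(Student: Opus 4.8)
The plan is to exploit the block structure of the distance matrix. Fix the labeling in which the three pairwise-nonadjacent vertices of $K_6\setminus K_3$ are $1,2,3$ and the triangle is $4,5,6$; then $d(i,j)=2$ for distinct $i,j\in\{1,2,3\}$ while all other off-diagonal distances equal $1$, so that
\[
  M = (J_6 - I_6) + \begin{pmatrix} J_3 - I_3 & O \\ O & O \end{pmatrix},
\]
where $J$ and $I$ denote all-ones and identity matrices of the indicated order. For $w\in\Z^6$ with $\sum_i w_i = 0$ we have $J_6 w = \zero$, hence $(J_6 - I_6)w = -w$; writing $s := w_1 + w_2 + w_3$ (so that $w_4 + w_5 + w_6 = -s$) and using $(J_3 - I_3)(w_1,w_2,w_3)^T = (s-w_1, s-w_2, s-w_3)^T$, I obtain $(Mw)_i = s - 2w_i$ for $i\in\{1,2,3\}$ and $(Mw)_i = -w_i$ for $i\in\{4,5,6\}$.

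The heart of the first assertion is that these six coordinates split into two triples of equal sum: $\sum_{i=1}^3 (Mw)_i = 3s - 2s = s$ and $\sum_{i=4}^6 (Mw)_i = -(w_4+w_5+w_6) = s$. I would then show that a six-term arithmetic progression with nonzero common difference admits no partition into two triples of equal sum. Writing the sorted progression as $t + jd$ with $d\neq 0$ and $j$ ranging over $\{0,1,\dots,5\}$, a triple with index set $P\subseteq\{0,\dots,5\}$ has sum $3t + d\sum_{j\in P}j$, so two complementary triples are equal in sum precisely when their index sets have equal sums; this is impossible because $0+1+\cdots+5 = 15$ is odd. Consequently the coordinates of $Mw$ cannot form such a progression, which is exactly the first claim. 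This index-parity obstruction is the only genuine idea in the argument; everything else is bookkeeping, and I expect this step to be the crux.

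For the second assertion I would simply exhibit a solution and verify it. Solving the system $Mw = (0,4,6,2,3,5)^T$ with the formulas above gives $w = (5,3,2,-2,-3,-5)^T$, and a one-line substitution confirms $Mw = (0,4,6,2,3,5)^T$; incidentally this $w$ also satisfies $\sum_i w_i = 0$. Since the sorted vector $0,2,3,4,5,6$ is not an arithmetic progression, this is consistent with the first part, and because here $\max Mw - \min Mw = 6$ while the greatest common divisor of the pairwise coordinate differences is $1$, it realizes the ratio $6$. Combined with the lower bound $r\ge 7$ furnished by the first part (the ratio is always an integer multiple of the gcd that exceeds $5$, hence is at least $6$), this pins down $r(K_6\setminus K_3) = 6+1 = 7$. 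The only mild subtlety is making the block computation of $Mw$ transparent; once it is in hand, nothing delicate remains.
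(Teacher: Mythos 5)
Your proof is correct, and it is worth noting that both arguments ultimately detonate the same parity bomb: a six-term arithmetic progression with nonzero common difference cannot split into two complementary triples of equal sum, because $0+1+\cdots+5=15$ is odd. Where you differ from the paper is the route to that point, and yours is genuinely more direct. The paper argues by contradiction from the inequalities $d \le \lvert (Mw)_i - (Mw)_j\rvert \le 5d$ implied by the AP hypothesis: after shifting so that $a_1+a_2+a_3=0=b_4+b_5+b_6$, it assembles an auxiliary sequence $b_1,\dots,b_6$ (which, after the normalization, agrees with the coordinates of $Mw$ up to a sign flip on the first three), re-derives that this auxiliary sequence must itself be equally spaced (six numbers whose pairwise distances all lie in $[d,5d]$ are forced into an AP), and only then runs the equal-sum parity count on the $b_i$. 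Your block decomposition $M=(J_6-I_6)+\operatorname{diag}(J_3-I_3,\,O)$ gives the closed forms $(Mw)_i=s-2w_i$ for $i\le 3$ and $(Mw)_i=-w_i$ for $i\ge 4$, from which the equal-sum splitting $\sum_{i\le 3}(Mw)_i=\sum_{i\ge 4}(Mw)_i=s$ is immediate, and you apply the parity argument directly to the coordinates of $Mw$ itself --- no auxiliary sequence, no sign bookkeeping, and no need for the pairwise-distance reconstruction step. What your version buys is brevity and transparency: it isolates exactly where the structure of $K_6\setminus K_3$ enters (the two vertex classes produce two equal-sum triples), whereas the paper's detour adds nothing essential here. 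Your second part (exhibiting $w=(5,3,2,-2,-3,-5)^T$ and checking $Mw=(0,4,6,2,3,5)^T$) coincides with the paper's, and your closing deduction that the ratio is an integer, is at least $6$ by part one, and is realized as $6$ by this $w$, so that $r(K_6\setminus K_3)=7$, is precisely the use the appendix makes of the proposition.
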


\begin{proof}
  Label the vertices of $K_6\setminus K_3$ of degree $3$ by $1,2,3$ and those of degree $5$ by $4,5,6$, and let $M$ be the distance matrix of $K_6\setminus K_3$:
  \[
    M := \begin{pmatrix}
      0 & 2 & 2 & 1 & 1 & 1 \\
      2 & 0 & 2 & 1 & 1 & 1 \\
      2 & 2 & 0 & 1 & 1 & 1 \\
      1 & 1 & 1 & 0 & 1 & 1 \\
      1 & 1 & 1 & 1 & 0 & 1 \\
      1 & 1 & 1 & 1 & 1 & 0
    \end{pmatrix}.
  \]
  Assume for the sake of contradiction that there exists $w\in \Z^6$ and a permutation $\pi$ on $[6]$ such that
  \begin{equation}\label{k6a}
    (Mw)_i = c + \pi(i)d \text{ for all }i\in[6],
  \end{equation}
  where $c \in \Z$ and $d \in \Z \setminus \sset{0}$. Observe that
  \begin{align*}
    3c + (\pi(1) + \pi(2) + \pi(3))d &= (Mw)_1 + (Mw)_2 + (Mw)_3 = 4(w_1 + w_2 + w_3) + 3(w_4 + w_5 + w_6),\\
    3c + (\pi(4) + \pi(5) + \pi(6))d &= (Mw)_4 + (Mw)_5 + (Mw)_6 = 3(w_1 + w_2 + w_3) + 2(w_4 + w_5 + w_6).
  \end{align*}
  Since $\sum_i w_i = 0$, we get that $\pi(1) + \pi(2) + \pi(3) = \pi(4) + \pi(5) + \pi(6)$, contradicting to $\sum_i \pi(i) = 15$.
  Finally, $w = \begin{pmatrix}
    5 & 3 & 2 & -2 & -3 & -5
  \end{pmatrix}^T$ satisfies $\sum_i w_i = 0$ and $Mw = \begin{pmatrix}
    0 & 4 & 6 & 2 & 3 & 5
  \end{pmatrix}^T$.
\end{proof}

\end{document}